\newtheorem{theorem}{Theorem}
\newtheorem{corollary}{Corollary}
\newtheorem{lemma}{Lemma}
\newtheorem{proposition}{Proposition}
\newtheorem{remark}{Remark}
\theoremstyle{definition}
\newtheorem{definition}{Definition}
\newtheorem{example}{Example}
\author{K. Ernest Bognini\corref{cor1}\fnref{label1}}
\begin{document}

\begin{center}
{\large \textbf{A new factorization of the generalized period-doubling sequences through kernel words and gaps sequences}}\\
\vspace*{1cm}
K. Ernest Bognini$^{a, 1}$\\
$^{a}$\footnotesize{\textit{Department of Mathematics and Computer science \\
Joseph KI-ZERBO University\\
Ouagadougou, Burkina Faso}\\
  \texttt{ernestk.bognini@yahoo.fr}}\\
03 BP 7021 Ouagadougou\\
\vspace*{0.5cm}
Hamdi Ammar$^{b}$\\
$^{b}$\footnotesize{\textit{Department of Mathematics\\
Sfax University}\\
\texttt{hamdi.ammar.lfm@hotmail.fr}
 }\\
BP 802 Tunisia\\
\footnote{Corresponding author: K. Ernest Bognini (ernestk.bognini@yahoo.fr)}
 \begin{abstract}
 \noindent
 \\[2mm]In this paper, we study some new factorizations of period-doubling sequences over a $k$-letter alphabet, where $k\geq 2$. First, we define the combinatorial and arithmetic properties of these sequences. Then, we define the kernel words of period-doubling sequences and demonstrate how to factorize a binary sequence using its kernel words. Next, we define gap sequences for period-doubling sequences and explore their relationship with kernel words. Lastly, we present a factorization of period-doubling sequences for $k\geq 3$ based on kernel words and gap sequences.
 {\textbf{Keywords:} Palindromic factor, kernel word, gap sequences, word factorization, period-doubling sequences.
\\[2mm]
 {\textbf{2020 Mathematics Subject Clasification:} 11B85, 37B10, 68Q45, 68R15.
}
}
 \end{abstract}
\end{center}

   \section{Introduction}
\label{sec1}
The binary period-doubling sequence $\textbf{P}_{2}$ is the unique fixed point of the substitution defined over $\mathcal{A}_{2}=\{0,1\}$ by:
 $$s_{2}:\ 0\mapsto 01,\ 1\mapsto 00.$$
This sequence originates from chaotic dynamics (see \cite{a1, a2, a3}). The name period-doubling of this sequence comes from the fact the fundamental block doubles with each step.

In references \cite{1,2}, the authors presented a natural generalization of the period-doubling sequences over $A_{k}= \{0, 1, \cdots, k-1\}$ noted $\textbf{P}_{k}$. The authors studied the Ziv-Lempel and Crochemore factorizations based on the minimality and maximality of factors that occur at least once or twice, respectively, in $\textbf{P}_{k}$. Additionally, they established a variant of these factorizations based on palindromic factors in $\textbf{P}_{k}$.

The notion of kernel words, introduced in \cite{3} as a new property of Tribonacci words, allowed the authors to study gap sequences and factorize the Tribonacci sequence as a product of kernel words. In \cite{a2}, the authors generalized this concept to the $k$-bonacci sequence. They obtained a factorization of the $k$-bonacci from the generalized kernel words of the latter. A recent study in \cite{4} generalized kernel words and their relation to gap sequences in the Tribonacci sequence over an infinite alphabet.
In light of these interesting studies, we are interested in the kernel words and gap sequences of certain infinite substitutive words, including the following: Tribonacci, $k$-bonacci, and generalized Tribonacci sequences over an infinite alphabet. This paper explores a new factorization of generalized period-doubling sequences $\textbf{P}_{k}$ through their kernel words and gap sequences.

Our paper is organized as follows:
Section 2 provides useful definitions and basic notation that are useful for the rest of the paper. Section 3 establishes the combinatorial and arithmetic properties of $\textbf{P}_{k}$. Section 4 first defines the kernel words of $\textbf{P}_{k}$, then establishes properties of these words using their prefixes. Next, we demonstrate that the binary sequence can be completely factored using the kernel words. Finally, we define gap sequences and determines the factorization of $\textbf{P}_{k}$ for $k\geq 3$ using gap sequences and kernel words.

\section{Background}
\label{sec2}

 Let $\mathcal{A}$ be a finite alphabet. The set of finite words over $\mathcal{A}$ is denoted by $\mathcal{A}^{*}$ and $\varepsilon$ represents the empty word. The set of nonempty finite words (resp. infinite words) over $\mathcal{A}$ is denoted by $\mathcal{A}^{+}$ (resp. $\mathcal{A}^{\omega}$). The set of all finite and infinite words over $\mathcal{A}$ is denoted by $\mathcal{A}^{\infty}$. Let $u\in \mathcal{A}^{\infty}$ and $v\in \mathcal{A}^{*}$. The word $v$ is called factor of $u$ if there exist $u_{1}\in \mathcal{A}^{*},\ u_{2}\in\mathcal{A}^{\infty}$ such that $u=u_{1}vu_{2}$. The factor $v$ is called prefix (resp. suffix) if $u_{1}$ (resp. $u_{2}$) is empty. Suppose that $u=a_{1}a_{2}\cdots a_{n}$ is a finite word with $a_{i}\in \mathcal{A}$, for $i=1,2,\cdots,n$. The word $\overline{u}=a_{n}a_{n-1}\cdots a_{1}$ is called the mirror of $u$. A word is a palindrome if it is equal to its mirror. For all $u\in \mathcal{A}^{*}$, $\vert u \vert$ designates the length of $u$. For $a\in \mathcal{A}$, we denote the inverse of $a$ by $a^{-1}$, one has: $aa^{-1}=a^{-1}a=\varepsilon$. If $u$ is a finite word over $\mathcal{A}$ beginning (resp. ending) with the letter $a$ then $a^{-1}u$ (resp. $ua^{-1}$) designates the word obtained from $u$ by deleting its first (resp. last) letter.\\

Let $u=a_{1}a_{2}\cdots a_{n}$ be a finite word. For any $i\leq j\leq n$, define $u[i,j]=a_{i}a_{i+1}\cdots a_{j-1}a_{j}$ as the factor of $u$ of length $j-i+1$, starting with the $i-$th letter and ending with the $j-$th letter.\\
A morphism over $\mathcal{A}^{*}$ is a map $f \ :\mathcal{A}^{*}\longrightarrow \mathcal{A}^{*}$ such that $f(uv)=f(u)f(v)$ for all $u, v\in \mathcal{A}^{*}$.
It is $k$-uniform, $k\in \mathbb{N}^{*}$, if $\vert f(a) \vert= k$, for any $a\in \mathcal{A}$. A morphism is said to be prolongable on $a$, if there exists $u\in \mathcal{A}^{+}$ such that $f(a)=au$.
In this case, the word $f^{n}(a)$ is a proper prefix of the word $f^{n+1}(a)$, for any integer $n$. The sequence $(f^{n}(a))_{n\geq 0}$ converges to a unique infinite word noted $f^{\infty}(a)$ and called a purely morphic word.

\section{Combinatorial and Arithmetical Properties of $\textbf{P}_{k}$}
\label{sec3}
Let $\mathcal{A}_{k}=\{0,\ 1,\ 2,\cdots, k-1\}$ be an alphabet with $k\geq 2$ letters. We define the $k$-period-doubling substitution of $\mathcal{A}^{\ast}_k$ on itself as follows:

 $$s_{k}(m)=\left\{\begin{array}{cc}
0(m+1),\ \textrm{if}\  m\in \mathcal{A}_{k-2}\\
00,\hspace*{1.2cm} \ \textrm{if}\ m=k-1.
\end{array} \right.$$
The generalized period-doubling sequence is the infinite word generated by $s_{k}$ from $0$. Thus:
\begin{align*}
\textbf{P}_{k}&=s^{\infty}_{k}(0)\\
&=01 02 0103 01020104 0102010301020105 \cdots.
\end{align*}

Note that for all $k\geq 3$,
$$\textbf{P}_{k} \equiv \textbf{P}_{2}[\mod (k-1)],\ \text{where}$$
 $$\textbf{P}_{2}=01 00 0101 01000100 01000101000101 010001010100\cdots .$$

For the rest of the work, let us set out the following:
$$W_{n}=s_{k}^{n}(0)\hspace*{1cm}\ \text{and}\hspace*{1cm} W_{n,m}=s_{k}^{n}(m)\ \text{for all}\ m\in \mathcal{A}_{k}-\{0\}.$$
The following list of words is provided for a given value of $n$.
\begin{align*}
W_{0}&=0\hspace*{4cm} W_{0,m}= m\\
W_{1}&=01\hspace*{3.8cm} W_{1,m}=\left\{ \begin{array}{ll}
0(m+1)\hspace*{0.3cm} \mbox{if}\ m\in [|1;k-2|]\\
00 \hspace*{2cm} \mbox{if}\ m=k-1
  \end{array}\right.\\
W_{2}&=01 02 \hspace*{3.5cm} W_{2,m}=\left\{  \begin{array}{ll}
010(m+2)\hspace*{0.3cm} \mbox{if}\ m\in [|2;k-3|]\\
0101 \hspace*{2cm} \mbox{if}\ m=k-2
  \end{array}\right.\\
W_{3}&=01020103\hspace*{2.6cm} W_{3,m}=\left\{ \begin{array}{ll}
0102010(m+3)\hspace*{0.3cm} \mbox{if}\ m\in [|3;k-4|]\\
01020102 \hspace*{2cm} \mbox{if}\ m=k-3
  \end{array}\right.\\
  &\vdots \hspace*{5cm} \vdots
\end{align*}

The mirror substitution of $s_{k}$ is defined as follows: 
$$\tilde{s}_{k}(m)=\textbf{E}_{k}(m)0,\ \textrm{for all}\ m\in \mathcal{A}_{k},$$

where $\textbf{E}_{k}$, the exchange application is defined over $\mathcal{A}_{k}$ by:
$$\textbf{E}_{k}(m)= \left\{\begin{array}{cc}
 m+1, \ \textrm{if}\ m\in [|0; k-2|]\\
0,\hspace*{0.7cm} \  \textrm{if}\ m=k-1.
\end{array} \right.$$
\begin{remark} The following holds for all $m\in\mathcal{A}_{k}$:
\begin{enumerate}
\item $s_{k}(m)=0\emph{\textbf{E}}_{k}(m)$.
\item $\tilde{s}_{k}(m)=\widetilde{s_{k}(m)}$.
\end{enumerate}
\end{remark}
\begin{lemma}\label{L1}
Let $k\geq 2$ be a fixed natural number. Then, for all $n\geq 2$, we have:
\begin{enumerate}
\item $W_{n}=\bigg(\displaystyle \prod_{l=n-1}^{0}W_{l}\bigg )(n)$.
\item $W_{n,m}=\left\{ \begin{array}{ll}
W_{n-1}W_{n-1,m+1}\hspace*{0.3cm} \mbox{if}\ m\in [|0;k-2|]\\
W_{n-1}^{2} \hspace*{2cm} \mbox{if}\ m=k-1
  \end{array}\right.$
\end{enumerate}
\end{lemma}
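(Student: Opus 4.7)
The plan is to prove (2) first, since it is the elementary building block, and then derive (1) either by induction or by iterated application of (2). Both parts rest on the fact that $s_k$ is a morphism, so $s_k^n$ distributes over concatenation.

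For (2), I would unfold one step of the iteration and split on the definition of $s_k$. Writing $W_{n,m} = s_k^{n-1}(s_k(m))$: if $m \in [|0;k-2|]$ then $s_k(m) = 0(m+1)$, whence applying $s_k^{n-1}$ yields $W_{n,m} = s_k^{n-1}(0)\, s_k^{n-1}(m+1) = W_{n-1}\, W_{n-1,m+1}$; and if $m = k-1$ then $s_k(m) = 00$, so $W_{n,m} = (s_k^{n-1}(0))^2 = W_{n-1}^{2}$. No induction is required.

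For (1), I would use induction on $n \geq 2$. The base case $W_2 = 0102 = W_1 W_0 \cdot (2)$ is immediate from $W_0 = 0$ and $W_1 = 01$. For the inductive step, I would assume $W_n = W_{n-1}W_{n-2}\cdots W_0 \cdot (n)$, apply the morphism $s_k$ to both sides, use the identities $s_k(W_l) = W_{l+1}$ and $s_k(n) = 0(n+1)$, and absorb the leading $0$ into $W_0$ to recover the analogous product formula for $W_{n+1}$. An equivalent route is to bypass induction entirely and iterate (2): starting from $W_n = s_k^{n-1}(01) = W_{n-1} W_{n-1,1}$, repeatedly expand $W_{n-j,j} = W_{n-j-1} W_{n-j-1,j+1}$ via (2) for $j = 1, 2, \dots, n-1$, terminating at $W_{0,n} = n$.

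The only real subtlety is a range condition that is implicit in the statement: both expressions involve the letter $(n)$, so they are only meaningful for $n \leq k-1$, and the inductive step for (1) uses $s_k(n) = 0(n+1)$, which holds only when $n \leq k-2$. I would make this restriction explicit before the proof, and note that the corresponding identity for $W_k$ must be treated separately via $s_k(k-1) = 00$.
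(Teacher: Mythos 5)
Your proof follows essentially the same route as the paper's: part (2) by unfolding one application of $s_k$ and using the morphism property, and part (1) by induction, applying $s_k$ to the product and using $s_k(W_l)=W_{l+1}$ together with $s_k(n)=0(n+1)$. If anything your version is more careful than the paper's, since you explicitly absorb the leading $0$ of $s_k(n)$ as the factor $W_0$ (which the paper's final line silently drops, ending at $\prod_{l=n}^{1}W_l$ instead of $\prod_{l=n}^{0}W_l$) and you rightly flag that the letter $(n)$ is only meaningful for $n\leq k-1$ unless it is read modulo $k$, a restriction the paper leaves implicit.
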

\begin{proof}
\begin{enumerate}
\item This can be proven by induction on the integer $n$. The equality is easily verified for $n=1$. given that the property holds for the rank $n\geq 1$ and show that it also holds for the rank $n+1$.

\begin{align*}
W_{n+1}&= s_{k}(W_{n})\\
&=s_{k}\bigg( \displaystyle \prod_{l=n-1}^{0}W_{l}\bigg)(n)\\
&= \displaystyle \prod_{l=n-1}^{0}s_{k}\bigg(W_{l}\bigg)s_{k}(n)\\
&= \displaystyle \prod_{l=n-1}^{0}s_{k}(s_{k}^{l}(0))(n+1)\\
&=\displaystyle \prod_{l=n-1}^{0}s_{k}^{l+1}(0)(n+1)\\
&= \displaystyle \prod_{l=n}^{1}s_{k}^{l}(0)(n+1)\\
&= \bigg(\displaystyle \prod_{l=n}^{1}W_{l}\bigg)(n+1).
\end{align*}

\item \begin{align*} 
W_{n,m}&=s_{k}^{n+1}(m)\\
&=s_{k}^{n}(s_{k}(m))\\
&=s_{k}^{n}(0\textbf{E}_{k}(m))\\
&=\left\{ \begin{array}{ll}
s_{k}^{n}(0(m+1))=W_{n}W_{n,m+1}\hspace*{0.3cm} \mbox{if}\ m\in [|0;k-2|]\\
s_{k}^{n}(00)=W_{n}W_{n}=W_{n}^{2} \hspace*{2cm} \mbox{if}\ m=k-1
  \end{array}\right.
\end{align*}

\end{enumerate}
\end{proof}
\begin{proposition}\label{P1}
For all natural integers $n$ and $l$, $W_{n}W_{l}$ is a factor of $\textbf{P}_{k}$.
\end{proposition}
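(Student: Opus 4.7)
The plan is to distinguish two cases according to whether $l\leq n$ or $l>n$.

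In the easy case $l\leq n$, I first observe that $W_{l}$ is a prefix of $W_{n}$: since $s_{k}(0)=01$ begins with $0$, each $W_{j}=s_{k}^{j}(0)$ is a prefix of $W_{j+1}=s_{k}(W_{j})$, and the prefix relation is transitive. Next, Lemma~\ref{L1}(2) with $m=k-1$ gives $W_{n+1,k-1}=W_{n}W_{n}$. The letter $k-1$ occurs in $\textbf{P}_{k}$ (it is the last letter of the prefix $W_{k-1}$), so applying $s_{k}^{n+1}$ to any occurrence of $k-1$ produces an occurrence of $W_{n}W_{n}$ in $\textbf{P}_{k}$. Hence $W_{n}W_{l}$, being a prefix of $W_{n}W_{n}$, is a factor of $\textbf{P}_{k}$.

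For the harder case $l>n$, I will locate an explicit occurrence of $W_{n}W_{l}$ using the self-similar block structure of $\textbf{P}_{k}$. Since $s_{k}$ is $2$-uniform and $\textbf{P}_{k}=s_{k}^{j}(\textbf{P}_{k})$ for every $j\geq 0$, the length-$2^{j}$ factor of $\textbf{P}_{k}$ occupying positions $q\cdot 2^{j},\ldots,(q+1)\cdot 2^{j}-1$ equals $W_{j,\textbf{P}_{k}[q]}$, with the convention $W_{j,0}=W_{j}$. I therefore seek an integer $q\geq 1$ such that the level-$l$ block at index $q$ equals $W_{l}$, that is $\textbf{P}_{k}[q]=0$, and the level-$n$ block immediately preceding it (at index $q\cdot 2^{l-n}-1$) equals $W_{n}$, that is $\textbf{P}_{k}[q\cdot 2^{l-n}-1]=0$. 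If such $q$ exists, then $W_{n}W_{l}$ appears at position $q\cdot 2^{l}-2^{n}$ of $\textbf{P}_{k}$.

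To realize both conditions, my plan is to take $q=2^{t}$. Then $q$ is even whenever $t\geq 1$, so the first condition $\textbf{P}_{k}[q]=0$ holds automatically. The second position $2^{t+l-n}-1$ is odd, and the identity $\textbf{P}_{k}[2i+1]=\textbf{E}_{k}(\textbf{P}_{k}[i])$ (which follows from $s_{k}(m)=0\,\textbf{E}_{k}(m)$) reduces $\textbf{P}_{k}[2^{t+l-n}-1]=0$ to $\textbf{P}_{k}[2^{t+l-n-1}-1]=k-1$. Iterating Lemma~\ref{L1}(1), the last letter of $W_{j}$ is $\textbf{E}_{k}^{j}(0)=j\bmod k$, hence $\textbf{P}_{k}[2^{j}-1]=j\bmod k$. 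The condition becomes $t+l-n-1\equiv k-1\pmod{k}$, that is $t\equiv n-l\pmod{k}$, which admits a solution $t\in\{1,\ldots,k\}$. The main obstacle lies in this second case: the two positional requirements are coupled through the odd-position recurrence, and are made simultaneously satisfiable only by selecting $t$ with the correct residue modulo $k$.
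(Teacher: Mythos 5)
Your proof is correct, but the second case follows a genuinely different route from the paper's. For $l\leq n$ you argue via $s_{k}^{n+1}(k-1)=W_{n}^{2}$ and the fact that $W_{l}$ is a prefix of $W_{n}$; the paper instead writes $W_{n}W_{l}=s_{k}^{l}(W_{n-l}0)$ and uses that $W_{n-l}0$ is a factor — both are the same idea (push a short factor through a power of $s_{k}$), and both work. For $l>n$ the divergence is real: the paper reduces to the first case through the recursive decomposition $W_{n+k}=W_{n+k-1}\cdots W_{n+1}W_{n}W_{n}$, so that $W_{n}W_{l}$ appears as a suffix of $W_{n+jk}W_{l}$ for suitable $j$; you instead exploit the $2$-uniformity of $s_{k}$ to read $\textbf{P}_{k}$ as a hierarchy of aligned blocks $s_{k}^{j}\bigl(\textbf{P}_{k}[q]\bigr)$, and you pin down an explicit position $q\cdot 2^{l}-2^{n}$ with $q=2^{t}$, using $\textbf{P}_{k}[2i]=0$, $\textbf{P}_{k}[2i+1]=\textbf{E}_{k}(\textbf{P}_{k}[i])$ and the last-letter formula $\textbf{P}_{k}[2^{j}-1]=j\bmod k$ to solve the alignment congruence $t\equiv n-l\pmod{k}$. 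Your version is specific to $2$-uniform substitutions (it would not transfer to the non-uniform $k$-bonacci setting the paper's references treat), but it buys an explicit occurrence and, as a bonus, it is airtight where the paper's Case 2 is terse: as printed, the paper's reduction only applies Case 1 to $W_{n+k}W_{l}$, which still requires $n+k\geq l$ and so implicitly needs the iteration you avoid altogether.
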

\begin{proof}
Note that the image of any letter $m$ in $\mathcal{A}_{k}$ begins with $0$. Therefore, $W_{n}0$ is a factor of $\textbf{P}_{k}$. We will continue the proof according to the cases below.\\
\noindent\textbf{Case 1.} $n\geq l$. Then, the following holds:
\begin{align*}
W_{n}W_{l}&=s_{k}^{n}(0)s_{k}^{l}(0)\\
&=s_{k}^{l}(s_{k}^{n-l}(0))s_{k}^{l}(0)\\
&=s_{k}^{l}(s_{k}^{n-l}(0)0)\\
&=s_{k}^{l}(W_{n-l}0).
\end{align*}
Since $W_{n-l}0$ is a factor of $\textbf{P}_{k}$ and $s_{k}(\textbf{P}_{k})=\textbf{P}_{k}$. Therefore, $s_{k}(W_{n-l}0)$ is also a factor of $\textbf{P}_{k}$.

\noindent\textbf{Case 2.} $n<l$. Then $W_{n+k}W_{l}=W_{n+k-1}W_{n+k-2}\cdots \underline{W_{n}W_{l}}$. Therefore, $W_{n}W_{l}$ is a factor of $\textbf{P}_{k}$.
\end{proof}
\begin{theorem}\cite{1}
Let $v$ be a factor of $\textbf{\emph{P}}_{k}$ such that $\vert v \vert> 2$. Then, the following statements are equivalent:
\begin{enumerate}
\item $v$ is a palindromic factor.
\item $s_{k}(v)0$ is a palindromic factor.
\item $0^{-1}s_{k}(v)$ is a palindromic factor.
\end{enumerate}
\end{theorem}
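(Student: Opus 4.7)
The plan is to exploit Remark 1(1), namely $s_k(m) = 0\textbf{E}_k(m)$ for every letter $m$, together with the injectivity of the exchange map $\textbf{E}_k : \mathcal{A}_k \to \mathcal{A}_k$. Writing $v = a_1 a_2 \cdots a_n$ with $a_i \in \mathcal{A}_k$, applying the morphism letter by letter immediately gives the explicit formulas $s_k(v) = 0\textbf{E}_k(a_1)\,0\textbf{E}_k(a_2)\cdots 0\textbf{E}_k(a_n)$, hence $s_k(v)\,0 = 0\textbf{E}_k(a_1)\,0\textbf{E}_k(a_2)\cdots 0\textbf{E}_k(a_n)\,0$ and $0^{-1}s_k(v) = \textbf{E}_k(a_1)\,0\textbf{E}_k(a_2)\,0\cdots 0\textbf{E}_k(a_n)$.

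To establish (1)$\Leftrightarrow$(2), I would read $s_k(v)0$ and its mirror position by position: the $0$'s occupy all odd positions on both sides and match automatically, while equating the even positions forces $\textbf{E}_k(a_i) = \textbf{E}_k(a_{n+1-i})$ for every $i$. Since $\textbf{E}_k$ is a bijection of $\mathcal{A}_k$, this is equivalent to $a_i = a_{n+1-i}$ for all $i$, i.e.\ to $v = \widetilde{v}$. For (2)$\Leftrightarrow$(3), I would use the identity $s_k(v)\,0 = 0\cdot (0^{-1}s_k(v))\cdot 0$: wrapping a word symmetrically by the same letter preserves and reflects the palindrome property, so the two are simultaneously palindromic.

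It remains to check that $s_k(v)0$ and $0^{-1}s_k(v)$ are genuinely \emph{factors} of $\textbf{P}_k$. Because $\textbf{P}_k = s_k(\textbf{P}_k)$ and $v$ is a factor, $s_k(v)$ is a factor as well; since $\textbf{P}_k$ is infinite, $v$ admits at least one right extension by a letter $m$, and $s_k(m)$ starts with $0$, so $s_k(v)\,0$ occurs in $\textbf{P}_k$. The word $0^{-1}s_k(v)$ is a suffix of $s_k(v)$ and therefore also a factor of $\textbf{P}_k$.

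I do not foresee a serious obstacle: once $s_k$ is unpacked via $\textbf{E}_k$, the three statements become a transparent assertion about the alternating patterns $0\,x_1\,0\,x_2\,\cdots\,0\,x_n\,0$ and $x_1\,0\,x_2\cdots 0\,x_n$, and the bijectivity of $\textbf{E}_k$ transfers the palindromy between the $x_i$'s and the $a_i$'s. The only mildly delicate point is the extendability argument needed to promote $s_k(v)$ to the factor $s_k(v)\,0$; this is why the hypothesis locates $v$ inside the \emph{infinite} fixed point rather than a finite prefix.
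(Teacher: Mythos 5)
Your proof is correct and complete. Note that the paper itself does not prove this statement: it is imported from reference \cite{1} with no argument given, so there is nothing to compare against line by line; your write-up is essentially the standard proof one would expect there. The three ingredients you use are exactly the right ones: the letter-level identity $s_k(m)=0\textbf{E}_k(m)$ combined with the bijectivity of $\textbf{E}_k$ reduces the palindromicity of $0\,x_1\,0\cdots 0\,x_n\,0$ to that of $a_1\cdots a_n$ (and the position-by-position matching is valid even when some $x_i=\textbf{E}_k(a_i)=0$, e.g.\ when $a_i=k-1$); the identity $s_k(v)0=0\,(0^{-1}s_k(v))\,0$ handles $(2)\Leftrightarrow(3)$; and you correctly discharge the factor-hood obligations, which many blind attempts forget --- $s_k(v)$ is a factor because $\textbf{P}_k$ is a fixed point of $s_k$, the trailing $0$ comes from extending $v$ on the right by a letter whose image begins with $0$, and $0^{-1}s_k(v)$ is a suffix of $s_k(v)$. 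One small observation: your argument never actually uses the hypothesis $\vert v\vert>2$, so you prove a slightly stronger statement; that hypothesis is presumably carried over from the formulation in \cite{1} and does no harm here.
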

Let us consider $(p_{n})_{n\geq 0}$ the sequence of finite words over $\mathcal{A}_{k}$ defined by:

\begin{equation} \label{E1}
p_{0}=\varepsilon \ \textrm{and}\  p_{n+1}=p_{n}\vartheta_{n}p_{n},\ \textrm{with}\ \vartheta_{n}=n\mod k, \ \textrm{for all}\ n\geq 0.
\end{equation}

\begin{remark} \label{C3} For all $n\geq 1$, $p_{n}$ is a palindromic prefix of $\textbf{P}_{k}$. Furthermore, $p_{n}=s_{k}(p_{n-1})0$.
\end{remark}

\begin{lemma}\cite{1} \label{C4}
The following properties hold for all $n\geq 0$:
\begin{enumerate}
\item $W_{n}=p_{n}\vartheta_{n}$, where $p_{n}$ is a palindrome and $\vartheta_{n}$ a letter such that:
$$\left\{\begin{array}{ll}
 p_{0}=\varepsilon\\
p_{n+1}=s_{k}(p_{n})0
\end{array} \right. \ \textrm{and}\ \vartheta_{n}=n\mod k.$$
\item $p_{n+1}=W_{n}W_{n-1}\cdots W_{1}W_{0}=\displaystyle\prod_{k=0}^{n}W_{n-k}$.
\end{enumerate}
\end{lemma}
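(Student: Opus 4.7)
The plan is to establish both parts by a single induction on $n$, leveraging the recursive structure of $s_k$, Remark~1, Lemma~\ref{L1}, and the palindrome theorem stated just above.

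For Part~(1), I would induct on $n$. The base case $n=0$ is immediate: $W_0 = 0 = \varepsilon\cdot 0 = p_0\vartheta_0$, since $0\mod k = 0$. For the inductive step, I apply $s_k$ to $W_n = p_n\vartheta_n$ and use Remark~1(1) to compute
$$W_{n+1} = s_k(p_n)\, s_k(\vartheta_n) = s_k(p_n)\cdot 0 \cdot \textbf{E}_k(\vartheta_n) = p_{n+1}\,\textbf{E}_k(\vartheta_n),$$
where the last equality is the defining recurrence $p_{n+1} = s_k(p_n)0$. A short case split on whether $\vartheta_n \in \{0,\ldots,k-2\}$ or $\vartheta_n = k-1$ then identifies $\textbf{E}_k(\vartheta_n)$ with $(n+1)\mod k = \vartheta_{n+1}$. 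To see that $p_{n+1}$ is a palindrome, I would invoke the equivalence (1)~$\Leftrightarrow$~(2) of the preceding theorem applied to $v = p_n$: since $p_n$ is, by induction, a palindromic prefix of $\textbf{P}_k$ (hence a palindromic factor), the theorem forces $s_k(p_n)0 = p_{n+1}$ to be a palindrome. The bottom cases $|p_n|\le 2$, for which the length hypothesis of the theorem fails, are handled by direct inspection of $p_0 = \varepsilon$, $p_1 = 0$, and $p_2 = 010$.

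For Part~(2), I would feed Part~(1) into Lemma~\ref{L1}(1). The latter gives $W_n = \bigl(\prod_{l=n-1}^{0} W_l\bigr)(n)$, where the trailing letter $(n)$ has to be read modulo $k$ in order to stay within $\mathcal{A}_k$, so it coincides with $\vartheta_n$. Part~(1) also writes $W_n = p_n\vartheta_n$. Comparing these two factorizations of the same finite word and cancelling the common last letter $\vartheta_n$ yields $p_n = W_{n-1}W_{n-2}\cdots W_0$; shifting the index by one delivers the announced formula $p_{n+1} = W_nW_{n-1}\cdots W_0 = \prod_{k=0}^{n} W_{n-k}$.

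The main obstacle I anticipate is the palindrome verification inside the inductive step of Part~(1): the palindrome theorem requires its input to be a factor of $\textbf{P}_k$ of length strictly greater than $2$, so one must verify the small initial cases by hand and explicitly argue that $p_n$ is a factor of $\textbf{P}_k$ for all $n$ (which is immediate, since $p_n$ is a prefix of $W_n$, hence of $\textbf{P}_k$). Once this bookkeeping is settled, everything else amounts to essentially mechanical manipulations on top of Lemma~\ref{L1} and Remark~1.
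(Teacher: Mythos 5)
This lemma is quoted from reference \cite{1} and the paper gives no proof of it, so there is nothing to compare against; judged on its own, your argument is correct and is the natural one. The induction for Part (1) via $W_{n+1}=s_k(p_n)\,0\,\textbf{E}_k(\vartheta_n)$ together with the case split identifying $\textbf{E}_k(\vartheta_n)$ with $(n+1)\bmod k$ is sound, and you correctly notice the one point that actually needs care: the palindromicity theorem only applies to factors of length greater than $2$, so $p_0=\varepsilon$, $p_1=0$, $p_2=010$ must be checked by hand before the equivalence $(1)\Leftrightarrow(2)$ can drive the induction, and one must know $p_n$ is a factor (it is a prefix of $W_n$ by the same induction). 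Part (2), obtained by cancelling the common final letter between $W_n=p_n\vartheta_n$ and the product formula of Lemma~\ref{L1}(1) (read modulo $k$), is likewise correct; the only bookkeeping worth adding is that Lemma~\ref{L1}(1) is stated for $n\geq 2$, so the cases $n=0,1$ of the product formula should be verified directly, which is immediate.
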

\begin{theorem}\cite{2}\label{T} Let $v$ be a nonempty palindromic factor of $\textbf{P}_{k}$ such that $v\neq 00$. Then, we have:
\begin{enumerate}
\item If $v$ begins with an odd power of the letter $0$, then there exists a palindromic factor $v'$ of $\textbf{P}_{k}$ such that $v=s_{k}(v')0$.
\item If $v$ begins with an even power of the letter $0$, then there exists a palindromic factor $v'$ of $\textbf{P}_{k}$ such that $v=0^{-1}s_{k}(v')$.
\end{enumerate}
\end{theorem}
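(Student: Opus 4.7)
The plan is to exploit the length-$2$ uniformity of $s_k$: every occurrence of $v$ in $\textbf{P}_k$ is either aligned with the block decomposition of $\textbf{P}_k$ into images $s_k(m)$ or shifted by one slot inside a block, and this phase can be read off from the parity of the leading run of $0$s in $v$. The first step is a phase lemma: since each block $s_k(m)$ begins with $0$, the non-zero letters of $\textbf{P}_k$ sit exclusively in the second slot of a block. Hence, if $v$ contains a non-zero letter and its leading maximal run of $0$s has length $q$, then the position of the first non-zero letter of $v$ forces $v$ to be block-aligned when $q$ is odd, and off-by-one when $q$ is even. Applying the same reasoning to the trailing zero-run, together with the palindromy of $v$, further forces $|v|$ to be odd in both cases.

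In the first case ($q$ odd), the block-aligned occurrence of $v$ starting at block $t$ with $|v|=2l+1$ consists of $l$ complete blocks followed by the leading $0$ of the next one; taking
\[
v'=\textbf{P}_k[t,\,t+l-1]
\]
then yields $v=s_k(v')\cdot 0$. In the second case ($q$ even), $v$ sits in the second slot of some block, so $0v$ is a block-aligned factor of $\textbf{P}_k$ of even length $2l$, giving $0v=s_k(v')$ with $v'=\textbf{P}_k[t,\,t+l-1]$ for the appropriate $t$, and hence $v=0^{-1}s_k(v')$. In both cases $v'$ is a factor of $\textbf{P}_k$ by construction.

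The palindromy of $v'$ follows by direct computation using the identity $\overline{s_k(w)}=\tilde{s}_k(\overline{w})$ derivable from Remark~1, which gives
\[
\overline{s_k(v')\,0}=s_k(\overline{v'})\,0 \quad\text{and}\quad \overline{0^{-1}s_k(v')}=0^{-1}s_k(\overline{v'}).
\]
Substituting $v=\overline{v}$ and invoking the injectivity of $s_k$ then forces $v'=\overline{v'}$. The remaining pure-$0$-power cases are handled separately: since the letter $k-1$ never appears as the first letter of a block (which is always $0$), two consecutive $k-1$s cannot appear in $\textbf{P}_k$, and consequently the only pure-$0$-power palindromic factors of $\textbf{P}_k$ besides the excluded $00$ are $v=0$ and $v=000$, covered by $v'=\varepsilon$ and $v'=k-1$ respectively. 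The main obstacle will be making the phase lemma precise: the $s_k(k-1)=00$ blocks are the only source of $0$s in second-slot positions, and one must track them carefully to justify the parity statement. Once this is in hand, the recovery of $v'$ and the palindromy transfer via the injectivity of $s_k$ are short.
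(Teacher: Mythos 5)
The paper does not prove this statement: Theorem~3 is imported verbatim from reference \cite{2}, so there is no in-text argument to compare yours against. Taken on its own, your proof is correct and essentially the standard desubstitution argument for a $2$-uniform substitution whose blocks all begin with $0$. The key facts all check out: every odd-indexed letter of $\textbf{P}_{k}$ is $0$ and every nonzero letter sits in an even (second-slot) position, so if the maximal leading $0$-run of $v$ has length $q$ and $v$ occurs at position $p$, then $p+q$ is even, which pins the phase exactly as you claim; the mirrored condition on the trailing run combined with $v=\overline{v}$ forces $|v|$ odd, so in the aligned case $v=s_{k}(v')0$ and in the shifted case $0v$ is a union of whole blocks (note that $p\geq 2$ in that case, since $p$ is even, so the preceding $0$ really exists). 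The identity $\overline{s_{k}(w)}=\tilde{s}_{k}(\overline{w})$, hence $\overline{s_{k}(v')0}=s_{k}(\overline{v'})0$ and $\overline{0^{-1}s_{k}(v')}=0^{-1}s_{k}(\overline{v'})$, together with the injectivity of the $2$-uniform $s_{k}$ (all letter images $0(m+1)$, $00$ are distinct), correctly transfers palindromicity to $v'$. Your treatment of the pure $0$-powers is also right: $0^{4}$ would place $0$ in two consecutive second slots, forcing the forbidden factor $(k-1)(k-1)$ in the preimage, so only $0$ and $000$ survive, with $v'=\varepsilon$ and $v'=k-1$ respectively, and the excluded word $00$ is precisely the case where neither desubstitution parses. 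The only things worth tightening in a final write-up are cosmetic: the length bookkeeping ($|0v|=|v|+1$, which is even, rather than ``$2l$'' with the same $l$ as before), and an explicit statement that the phase computation applies to \emph{some} (equivalently every) occurrence of $v$, which is all the construction of $v'$ requires.
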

\begin{definition}
The finite $k$-period-doubling sequence $W_{m}$ is defined by he following relation:
$$W_{m} = W_{m-1}W_{m-2}W_{m-3}W_{m-4}\ldots W_{m-(k-1)}W_{m-k}W_{m-k} ~~ for ~~ all ~~ m\geq k.$$

The positive integer $w_{m} := |W_{m}|$ is called the $m$-th $k$-periode-doubling number defined as follow:

$$w_{m} = w_{m-1} + w_{m-2} + w_{m-3} + w_{m-4} \cdots + w_{m-(k-1)} + 2(w_{m-k}) ~~ for ~~ all ~~ m\geq k.$$

\end{definition}

\begin{theorem}\label{T1}
For $n\geq k-1$ and $i\in\{1,2,\ldots, k-1\}.$\\
The longest common prefix of $\displaystyle\prod_{j=n-(i+1)}^{n-k}W_{j}W_{n-1}\displaystyle\prod_{j=n-2}^{n-i}W_{j}$ and $W_{n}$ is $p_{n-i}.$\\
\end{theorem}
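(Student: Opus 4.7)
My plan is to prove the claim in two steps: first, verify that $p_{n-i}$ is a prefix of both the given concatenation and of $W_n$; second, show that the two words disagree at the letter immediately following $p_{n-i}$.

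For the common-prefix step, I will exploit the chain: by the recursion $p_{m+1}=p_m\vartheta_m p_m$ of (\ref{E1}), each $p_m$ is a prefix of $p_{m+1}$, and by Lemma \ref{C4}(1), $W_m=p_m\vartheta_m$, so $p_m$ is also a prefix of $W_m$. Iteratively, this gives $p_{n-i}$ as a prefix of $p_n$, hence of $W_n$. To handle the concatenation $\prod_{j=n-i-1}^{n-k}W_j\cdot W_{n-1}\cdot\prod_{j=n-2}^{n-i}W_j$, I will rewrite $p_{n-i}=(W_{n-i-1}W_{n-i-2}\cdots W_{n-k})\cdot p_{n-k}$, splitting the expansion $p_{n-i}=W_{n-i-1}W_{n-i-2}\cdots W_1 W_0$ from Lemma \ref{C4}(2) at $W_{n-k}$. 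The left factor matches the leading product $\prod_{j=n-i-1}^{n-k}W_j$ letter for letter, and the remainder $p_{n-k}$ matches the initial segment of the next block $W_{n-1}$, since $p_{n-k}$ is a prefix of $p_{n-1}$, hence of $W_{n-1}$. The trailing product $\prod_{j=n-2}^{n-i}W_j$ plays no role in the comparison because $|p_{n-k}|<|W_{n-1}|$.

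For the divergence step, I will compare the $(|p_{n-i}|+1)$-st letter of each word. In $W_n$, the identity $p_{n-i+1}=p_{n-i}\vartheta_{n-i}p_{n-i}$ together with $p_{n-i+1}$ being a prefix of $W_n$ forces this letter to be $\vartheta_{n-i}=(n-i)\bmod k$. In the concatenation, position $|p_{n-i}|$ lies inside the $W_{n-1}$ block at internal offset $|p_{n-i}|-|W_{n-i-1}\cdots W_{n-k}|=|p_{n-k}|$; applying the same identity with $n-k$ in place of $n-i$ (using that $p_{n-k+1}$ is a prefix of $W_{n-1}$) yields $\vartheta_{n-k}=(n-k)\bmod k$. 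For $i\in\{1,\ldots,k-1\}$ we have $i\not\equiv 0\pmod k$, so $(n-i)\bmod k\neq(n-k)\bmod k$ and the two letters differ, pinning the longest common prefix down to exactly $p_{n-i}$.

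The principal obstacle I anticipate is the length bookkeeping: verifying the decomposition $p_{n-i}=(W_{n-i-1}\cdots W_{n-k})\cdot p_{n-k}$, confirming that the remainder has length exactly $|p_{n-k}|$, and checking that this remainder sits strictly inside $W_{n-1}$ rather than overflowing into the trailing blocks. Once these positions are pinned, the divergence argument reduces to a clean residue computation modulo $k$.
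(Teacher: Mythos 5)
Your proposal is correct, but it proves the theorem by a genuinely different route than the paper. The paper argues by induction on $n$: it verifies the base case $n=k-1$ by repeatedly unfolding $W_{n}=\bigl(\prod_{l=n-1}^{0}W_{l}\bigr)(n)$ from Lemma \ref{L1}, and then pushes the common prefix up one level using the containments ``$W_{m-1}$ is a prefix of $W_{m}$'' and ``$W_{m}$ is a prefix of $W_{m+1}$''. You instead give a direct, non-inductive computation of the first position of disagreement: writing $p_{n-i}=(W_{n-i-1}\cdots W_{n-k})\,p_{n-k}$ via Lemma \ref{C4}(2), you locate position $|p_{n-i}|+1$ inside the $W_{n-1}$ block at offset $|p_{n-k}|+1$, and the recursion $p_{m+1}=p_{m}\vartheta_{m}p_{m}$ then identifies the two competing letters as $\vartheta_{n-i}$ and $\vartheta_{n-k}$. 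This is arguably sharper than the paper's argument: it makes completely transparent why the hypothesis $i\in\{1,\dots,k-1\}$ is needed (it is exactly the condition $\vartheta_{n-i}\neq\vartheta_{n-k}$, i.e.\ $i\not\equiv 0\pmod k$), whereas the paper's inductive step asserts the conclusion about the longest common prefix without exhibiting the mismatched letters. One small caveat: your decomposition requires $n\geq k$ so that $W_{n-k}$ and $p_{n-k}$ are defined; for the boundary case $n=k-1$ the statement itself involves the undefined symbol $W_{-1}$ (the paper's base case also only treats $W_{-1}$ informally), so you should either exclude $n=k-1$ or state explicitly what convention you adopt for it. With that bookkeeping made explicit, your argument is complete and rigorous.
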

\begin{proof}
Using induction on $n$, we show that for $n\geq k-1,$ $\displaystyle\prod_{l=n-(i+1)}^{n-k}W_{l}W_{n-1}$ is not a prefix of $W_{n}$ and their longest common prefix is $p_{n-i}.$\\
We have for $n = k-1,$
\begin{align*}
  W_{n} &= W_{k-1} \\
   &= \bigg(\displaystyle \prod_{l=k-2}^{0}W_{l}\bigg )(k-1) ~~~~~ by ~~ Lemma ~~\ref{L1} \\
   &= \underline{W_{k-2}}\underbrace{W_{k-3}\ldots W_{2}W_{1}W_{0}}(k-1)  \\
   &= \underline{\bigg(\displaystyle \prod_{l=k-3}^{0}W_{l}\bigg )(k-2)}\underbrace{\bigg(\displaystyle \prod_{l=k-3}^{0}W_{l}\bigg )}(k-1) ~~~~~ by ~~ Lemma ~~\ref{L1}  \\
   &= \underline{W_{k-3}}\underbrace{W_{k-4}\ldots W_{2}W_{1}W_{0}}(k-2)\bigg(\displaystyle \prod_{l=k-3}^{0}W_{l}\bigg )(k-1) \\
   &= \underline{\bigg(\displaystyle \prod_{l=k-4}^{0}W_{l}\bigg )(k-3)}\underbrace{\bigg(\displaystyle \prod_{l=k-4}^{0}W_{l}\bigg )}(k-2) \bigg(\displaystyle \prod_{l=k-3}^{0}W_{l}\bigg)(k-1) ~~~~~ by ~~ Lemma ~~\ref{L1}  \\
   &\vdots  \\
   &= \bigg(\displaystyle \prod_{l=k-i-2}^{0}W_{l}\bigg )\overbrace{(k-i-1)}\bigg(\displaystyle \prod_{l=k-i-2}^{0}W_{l}\bigg )(k-i)\bigg(\displaystyle \prod_{l=k-i-1}^{0}W_{l}\bigg )(k-i+1)\\
   &\bigg(\displaystyle \prod_{l=k-i}^{0}W_{l}\bigg )(k-i+2)\ldots \bigg(\displaystyle \prod_{l=k-3}^{0}W_{l}\bigg )(k-1) ~~~~~ by ~~ Lemma ~~\ref{L1} ,
\end{align*}
and \\
\begin{align*}
  \displaystyle \prod_{l=n-(i+1)}^{n-k}W_{l}W_{n-1} &= \displaystyle \prod_{l=k-1-(i+1)}^{-1}W_{l}W_{k-2} \\
    &= \underbrace{\displaystyle \prod_{l=k-i-2}^{0}}W_{l}\overbrace{W_{-1}}W_{k-2}.
\end{align*}
So, $\underbrace{\displaystyle \prod_{l=k-i-2}^{0}}W_{l}\overbrace{W_{-1}}W_{k-2}$ is not a prefix of $W_{k-1}$ since $W_{-1} \neq k-i-1$ and
the longest common prefix of $\displaystyle \prod_{l=k-i-2}^{0}W_{l}W_{-1}W_{k-2}$ and $W_{k-1}$ is $$\displaystyle \prod_{l=k-i-2}^{0}W_{l} = p_{k-1-i}.$$
Assume that for $n\geq k-1.$
Suppose the statement about the longest common prefix holds for all $n$ with $k-1\leq n\leq m,$ and we now prove it for $n = m+1.$\\
We have $$\displaystyle \prod_{l=m+1-(i+1)}^{m+1-k}W_{l}W_{m+1-1} = \prod_{l=m-i}^{m+1-k}W_{l}W_{m}$$ and $$W_{n} = W_{m+1}.$$
By the induction hypothesis, for all $n$ with $k-1\leq n\leq m,$ the longest common prefix of $\displaystyle \prod_{l=m-(i+1)}^{m-k}W_{l}W_{m-1}$ and $W_{m}$ is $p_{m-i}.$\\
Since $$W_{m-1} ~~ is ~~ a ~~ prefix ~~ of ~~ W_{m},$$
$$W_{m} ~~ is ~~ a ~~ prefix ~~ of ~~ W_{m+1}$$
and we have $$\displaystyle \prod_{l=m-(i+1)}^{m-k}W_{l} ~~ is ~~ a ~~ prefix ~~ of ~~ \displaystyle \prod_{l=m-i}^{m+1-k}W_{l}$$
then, the longest common prefix of $\displaystyle \prod_{l=m-i}^{m+1-k}W_{l}W_{m}$ and $W_{m+1}$ is $$\displaystyle \prod_{l=m-i}^{m+1-k}W_{l}p_{m-i} = p_{m-i+1}.$$
Thus, by induction on $n,$ the result holds.
\end{proof}
\begin{theorem}
The period-doubling sequence is defined by the substitution $s_{k}.$
$$\textbf{P}_{k} = \displaystyle \prod_{i=0}^{\infty} \overline{W_{i}} = \overline{W_{0}} ~ \overline{W_{1}} ~ \overline{W_{2}} \cdots.$$
\end{theorem}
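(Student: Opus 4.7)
The plan is to reduce the claim to the telescoping identity for the palindromic prefixes $p_n$. The key observation is that each mirror $\overline{W_i}$ has a very explicit closed form, so the infinite product on the right can be matched prefix-by-prefix with the known approximations $p_{n+1}$ of $\textbf{P}_{k}$.

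First I would combine Lemma \ref{C4}(1) with Remark \ref{C3} to write $W_i = p_i \vartheta_i$ with $p_i$ a palindrome and $\vartheta_i$ a single letter. Applying the mirror operation and using that the mirror reverses concatenation and fixes palindromes yields
$$\overline{W_i} = \overline{p_i\vartheta_i} = \overline{\vartheta_i}\,\overline{p_i} = \vartheta_i p_i,$$
so that the partial products take the form
$$\prod_{i=0}^{n} \overline{W_i} = \vartheta_0 p_0\,\vartheta_1 p_1\,\vartheta_2 p_2\,\cdots\,\vartheta_n p_n.$$

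Next, I would show by induction on $n$ that this partial product equals $p_{n+1}$. The base case is immediate since $p_0 = \varepsilon$ and $\vartheta_0 = 0$, which gives $\overline{W_0} = 0 = p_1$. For the inductive step, assuming $\prod_{i=0}^{n-1}\overline{W_i} = p_n$, one computes
$$\prod_{i=0}^{n} \overline{W_i} = \Big(\prod_{i=0}^{n-1}\overline{W_i}\Big)\,\vartheta_n p_n = p_n\,\vartheta_n\,p_n = p_{n+1},$$
where the last equality is the defining relation (\ref{E1}) for $(p_n)$.

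Finally, I would pass to the limit. By Remark \ref{C3}, each $p_{n+1}$ is a palindromic prefix of $\textbf{P}_{k}$, and the lengths $|p_{n+1}|$ tend to infinity. Hence the nested prefixes $\prod_{i=0}^{n} \overline{W_i} = p_{n+1}$ converge to $\textbf{P}_{k}$, which gives
$$\textbf{P}_{k} = \lim_{n\to\infty}\prod_{i=0}^{n}\overline{W_i} = \prod_{i=0}^{\infty}\overline{W_i}.$$
There is no genuine obstacle in this plan: the one nontrivial ingredient is the identity $\overline{W_i} = \vartheta_i p_i$, obtained from the palindromicity of $p_i$, after which the theorem reduces to unfolding the recursion $p_{n+1} = p_n\vartheta_n p_n$ and invoking the fact that $(p_n)$ converges to $\textbf{P}_{k}$.
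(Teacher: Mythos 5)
Your proposal is correct and follows essentially the same route as the paper: both reduce the claim to the identity $\overline{W_0}\,\overline{W_1}\cdots\overline{W_n}=p_{n+1}$ (you re-derive it by induction from $\overline{W_i}=\vartheta_i p_i$ and the recursion $p_{n+1}=p_n\vartheta_n p_n$, while the paper reads it off from Lemma \ref{C4}(2) and palindromicity) and then conclude by noting that the palindromic prefixes $p_n$ exhaust $\textbf{P}_k$. The only cosmetic difference is that you invoke Remark \ref{C3} for the prefix property where the paper routes through Theorem \ref{T1}; both suffice.
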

\begin{proof}
The proof of this Theorem follows from, Lemma \ref{C4}, the definition of the sequence $p_{n}$ and Theorem \ref{T1}.\\
From Lemma \ref{C4} and the definition of $p_{n},$ we have:
$$p_{n} = \overline{W_{0}} ~ \overline{W_{1}} ~ \overline{W_{2}}\cdots \overline{W_{n-1}}.$$
According Theorem \ref{T1}, for any $n,$ $p_{n}$ is a prefix of $W_{n+1}.$ Since $W_{n+1}$ itself is a prefix of $\textbf{P}_{k},$ it follows that $p_{n}$ being a palindrome is also a prefix of $\textbf{P}_{k}.$\\
 Hence,
$$\textbf{P}_{k} = \displaystyle \prod_{i=0}^{\infty} \overline{W_{i}} = \overline{W_{0}} ~ \overline{W_{1}} ~ \overline{W_{2}} \cdots.$$
\end{proof}

\section{New factorization of $\textbf{P}_k$ using kernel words and gap sequences}\label{sec4}
In this section, we frist generalize the definition of the kernel word associated with the period-doubling sequence and study some of its properties. Next, we provide explicit formulas for factorizing the binary period-doubling sequence into kernel words. Lastly, we define the gap sequences with respect to the kernel words in the generalized period-doubling sequences and provide their factorization.

\subsection{Kernel words and some of its related properties}\label{subsec4.1}
The terms \emph{kernel number} and \emph{kernel word} are defined as follows:

\begin{definition}(Kernel number of the period-doubling number)\label{17}\\
 Let $\{r_{i}\}_{i\geq0}$ be the sequence of positive integers defined by:

$$\left\{
  \begin{array}{ll}
    r_{0} = 0, \\
    r_{1} = r_{2} = r_{3} = \cdots = r_{k-1} = r_{k} = 1, \\
    r_{i} = r_{i-1} + r_{i-2} + r_{i-3} + \cdots + r_{i-(k-1)} + 2(r_{i-k}) - (k-2) \ \ for \ \ m> k.
  \end{array}\label{e}(1)
\right.$$
The number $r_{i}$ is called the $i$-th kernel number of the period-doubling number.
\end{definition}
\begin{definition}(Kernel words of the period-doubling sequence)\label{19}\\
Let $\{R_{i}\}_{i\geq 0}$ be the sequence of factors of $\textbf{P}_{k}$ defined as follows:
$$\left\{
    \begin{array}{ll}
      R_{0} = \epsilon \\
      R_{i} = (i-1), ~~ \forall ~~ i\in[|1,k|], \\
      R_{k+1} = 000, \\
      R_{k+2} = 10101,
    \end{array}
  \right.$$
and
$$R_{i} = \left\{
            \begin{array}{ll}
              0^{-1}s_{k}(R_{i-1}) ~~~~~~~~~~ if ~~ i ~~ is ~~ even,  \\
~~~~~~~~~~~~~~~~~~~~~~~~~~~~~~~~~~~~~~~~~~~~~~~~~~~~~~~~~~~~~~~~\forall ~~ i> k+2 \\
              s_{k}(R_{i-1})0 ~~~~~~~~~~~~~~ otherwise
            \end{array}
          \right.$$
The factors $R_{i}$ for all $i\geq 0$ are called the kernel words of $\textbf{P}_{k}$ with order $i.$
\end{definition}
\begin{example}
The first few values, kernel numbers, and kernel words of the $3$-period-doubling sequence as follows:
$$\textbf{P}_{3} = 0102010001020101010201000102010201020100010201010102010001020100010201000\ldots$$
$$\left\{
  \begin{array}{ll}
    R_{0} = \epsilon \\
    R_{1} = 0 \\
    R_{2} = 1 \\
    R_{3} = 2 \\
    R_{4} = 000 \\
    R_{5} = 10101 \\
    R_{6} = 201020102 \\
    R_{7} = 0001020100010201000 \\
    \vdots
  \end{array}
\right. ~~ and ~~ \left\{
                    \begin{array}{ll}
                      r_{0} = 0 \\
                      r_{1} = 1 \\
                      r_{2} = 1 \\
                      r_{3} = 1 \\
                      r_{4} = 3 \\
                      r_{5} = 5 \\
                      r_{6} = 9 \\
                      r_{7} = 19 \\
                      \vdots
                    \end{array}
                  \right.$$
\end{example}

 \subsection{Factorization of $\textbf{\emph{P}}_{k},\ k\geq 2$ with respect to gap sequences and kernel words}\label{subsec4.2}
\subsubsection{Factorization of $\textbf{P}_{2}$ into kernel words}\label{subsubsec4.2.1}
\begin{proposition}
The binary period-doubling sequence can be uniquely factored into kernel words as follows:
$$\textbf{P}_{2} = \displaystyle \prod_{i\geq 1} R_{i} = 0 ~~ 1 ~~ 000 ~~ 10101 ~~ 00010001000\ldots$$
\end{proposition}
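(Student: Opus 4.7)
The plan is to prove by induction on $n$ that every partial product $P_n := R_1 R_2 \cdots R_n$ is a prefix of $\textbf{P}_2$. Since $r_i \geq 1$ for all $i \geq 1$, the lengths $|P_n| = \sum_{i=1}^{n} r_i$ tend to infinity, so verifying that each $P_n$ is a prefix of $\textbf{P}_2$ immediately forces $\textbf{P}_2 = \prod_{i \geq 1} R_i$. Uniqueness is automatic, because the factors $R_i$ are prescribed by Definition~\ref{19}.

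The key ingredient is a substitution identity for $P_n$. First, I would observe that, combining Definition~\ref{19} with direct checks on the base cases $R_1 = 0$, $R_2 = 1$, $R_3 = 000$, $R_4 = 10101$, a single uniform rule holds for every $i \geq 1$ (with $k=2$):
\[
s_2(R_i) = 0\,R_{i+1} \text{ if } i \text{ is odd}, \qquad s_2(R_i) = R_{i+1}\,0^{-1} \text{ if } i \text{ is even}.
\]
The expression $R_{i+1}\,0^{-1}$ is well-defined, because every odd-indexed kernel word ends in $0$ by its recursive shape $R_{j} = s_2(R_{j-1})\,0$. Concatenating these identities for $i = 1, \ldots, n$ and letting the trailing $0^{-1}$ produced at an even index cancel the leading $0$ produced at the next odd index yields the telescoped identity
\[
s_2(P_n) = P_{n+1} \text{ if } n \text{ is odd}, \qquad s_2(P_n) = P_{n+1}\,0^{-1} \text{ if } n \text{ is even}.
\]

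The induction now proceeds as follows. The base case $P_1 = 0$ is clearly a prefix of $\textbf{P}_2$. For the step, assume $P_n$ is a prefix of $\textbf{P}_2$; since $s_2(\textbf{P}_2) = \textbf{P}_2$, the word $s_2(P_n)$ is also a prefix. When $n$ is odd, the identity above gives $P_{n+1}$ as a prefix directly. The delicate case, which I expect to be the main obstacle, is $n$ even: the identity only yields the truncation $P_{n+1}\,0^{-1}$ as a prefix, and one must argue that the next letter of $\textbf{P}_2$ matches the final $0$ of $P_{n+1}$ that was peeled off.

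To close this gap, I would invoke a general parity property of $\textbf{P}_2$: every letter at an odd position is $0$. This is because $s_2(0) = 01$ and $s_2(1) = 00$ both begin with $0$, so in the fixed point $\textbf{P}_2 = s_2(\textbf{P}_2)$ every position of the form $2j-1$ carries the letter $0$. When $n$ is even, $|P_{n+1}| = 2|P_n|+1$ is odd, and $R_{n+1}$ (with $n+1$ odd) ends in $0$ by the observation made above. Consequently the letter of $\textbf{P}_2$ at position $|P_{n+1}|$ equals $0$, which is exactly the last letter of $P_{n+1}$, so $P_{n+1}$ is a prefix of $\textbf{P}_2$, closing the induction and hence establishing the factorization.
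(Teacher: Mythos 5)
Your argument is correct, and it takes a genuinely different --- and in fact more complete --- route than the paper's. The paper's proof only checks that each $R_{i}$ is a palindromic factor of $\textbf{P}_{2}$ (directly for $i\leq 4$, via Theorem \ref{T} for $i\geq 5$) and asserts that $R_{i}$ is not a factor of $R_{i+1}$ for uniqueness; it never verifies that the kernel words occur consecutively starting from position $1$, which is what the displayed identity actually asserts. You instead prove exactly that: the uniform rule $s_{2}(R_{i})=0R_{i+1}$ ($i$ odd), $s_{2}(R_{i})=R_{i+1}0^{-1}$ ($i$ even) follows from Definition \ref{19} for $i\geq 4$ and is checked directly for $i\leq 3$; the telescoping to $s_{2}(P_{n})=P_{n+1}$ ($n$ odd) or $P_{n+1}0^{-1}$ ($n$ even) is sound because the cancelled letter is the final $0$ of an odd-indexed kernel word; and the parity observation (every odd position of $\textbf{P}_{2}$ carries the letter $0$, while $|P_{n+1}|=2|P_{n}|+1$ is odd and $R_{n+1}$ ends in $0$ for $n+1$ odd) correctly restores the truncated letter in the delicate even case. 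Your reading of ``uniquely'' as the factorization being prescribed by the fixed sequence $\{R_{i}\}$ is at least as defensible as the paper's non-occurrence criterion, which is never tied to a precise uniqueness statement. What the paper's route buys is coherence with the palindromic machinery it uses throughout Section \ref{sec4}; what yours buys is an actual, self-contained proof of the equality $\textbf{P}_{2}=\prod_{i\geq 1}R_{i}$.
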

\begin{proof}
\begin{itemize}
  \item For $i\in [|0,4|],$ we have $R_{0} = \epsilon,$ $R_{1} = 0,$ $R_{2} = 1,$ $R_{3} = 000,$ $R_{4} = 10101.$ They are palindromic factors of $\textbf{P}_{2}.$\\
\item For $i\geq 5$ use the above Definition and Theorem \ref{T}, we have each $R_{i}$ is a palindromic factors of $\textbf{P}_{2}.$\\
\item For the uniqueness of each factor $R_{i}$  in the factorization, we show that $R_{i}$ is not a factor of $R_{i+1}$ for all $i\geq 0.$ (See, the above Definition).

\end{itemize}

\end{proof}

\subsubsection{Factorization of $\textbf{P}_{k},\ k\geq 3$ using kernel words and gap sequences}\label{subsubsec4.2.2}

We will now define the specific factors in $\textbf{P}_{k}$ that will be useful for the rest of the work. 
\begin{definition}
Let $w$ be a factor of the $k$-period-doubling sequence $\textbf{P}_{k} = u_{1}u_{2}\cdots$. Then, $w$ occurs infinitely many times which in the sequence, which we arrange by the sequence $\{w_{p}\}_{p\geq1},$ where $w_{p}$ denotes the $p$-th occurrence of $w.$
\end{definition}
Now, we will define gap sequences and the set of gaps for an infinite sequence.
 \begin{definition}
Let $G_{p}(w)$ be the gap between $w_{p}$ and $w_{p+1},$ we call $\{G_{p}(w)\}_{p\geq1}$ the gap sequence of the factor $w.$
\end{definition}
\begin{definition}
The set of gaps of factor $w$ is defined as follows: $$\{G_{p}(w)\ \mid p\geq1\}.$$
\end{definition}

Let $w$ be of length $n$, and let $w_p = u_{i+1}u_{i+2}\cdots u_{i+n}$ and $w_{p+1} = u_{j+1}u_{j+2}\cdots u_{j+n}$. The gap between $w_p$ and $w_{p+1}$ is defined by: 
$$G_{p}(w) = \left\{
               \begin{array}{ll}
                 \epsilon ~~~~~~~~~~~~~~~~~~~~~~~~~~~~~~~~~ when ~~ i+n = j, ~~ w_{p} ~~ and ~~ w_{p+1} ~~ are ~~ adjacent;  \\
                 u_{i+n+1}\cdots u_{j} ~~~~~~~~~~~~~~~~~ when ~~ i+n < j, ~~ w_{p} ~~ and ~~ w_{p+1} ~~ are ~~ separated; \\
                 (u_{j+1}\cdots u_{i+n})^{-1}~~~~~~~~~~~~ when ~~ i+n > j, ~~ w_{p} ~~ and ~~ w_{p+1} ~~ are ~~ overlapped.
               \end{array}
             \right.$$
The sequence $\{G_{p}(w)\}_{p\geq1}$ is called the gap sequence of the factor $w.$ By convention, we define $G_{0}(w)$ as the prefix of $\textbf{P}_{k}$ before $w_{1}.$ When $w_{p}$ and $w_{p+1}$ overlap, the overlapping part is the word $u_{j+1}\cdots u_{i+n}$. We take its inverse word as the gap $G_{p}(w).$
\begin{definition}
The gap sequences of the $k$-period-doubling sequences for $k\geq 3$ are given by the following:
$$ \left\{\begin{array}{ll}
               G_{n}=p_{n}\ \forall\ n\in [|0, k-2|]\\
               G_{n}=\left\{
               \begin{array}{ll}
               s_{k}(G_{n-1})\ if\ n\not\equiv 0 \mod k\\
               \hspace*{6cm}\forall \ n\in [|k-1, k|]\\
               \tilde{s}_{k}(G_{n-1}) \ otherwise
               \end {array}
             \right.\\
               G_{n}=s_{k}(G_{n-1})0\ \forall\ n\geq k+1.
              \end {array}
             \right.
             $$
\end{definition}
\begin{example}
For $k = 3,$ we illustrate the gaps between two consecutive kernel words. We denote the gap between the kernel words $R_{n}$ and $R_{n+1}$ by $G_{n},$ for all $n\geq 1.$
\begin{eqnarray*}
  \textbf{P}_{3} &=& \underline{0} ~~~~~ \underbrace{\epsilon} ~~~~ \underline{1} ~~~~~ \underbrace{0} ~~~~~ \underline{2} ~~~~ \underbrace{01} ~~ \underline{000} ~~~~ \underbrace{1020} ~~~~~~ \underline{10101} ~~ \underbrace{020100010}\ldots  \\
   &=& \underline{R_{1}}~~~\underbrace{G_{1}}~~\underline{R_{2}} ~~~ \underbrace{G_{2}} ~~~~ \underline{R_{3}} ~~~ \underbrace{G_{3}} ~~ \underline{R_{4}} ~~~~~ \underbrace{G_{4}} ~~~~~~~~~~ \underline{R_{5}} ~~~~~~~~~~ \underbrace{G_{5}}\ldots
\end{eqnarray*}
\end{example}
\begin{remark}
The gap sequences for the binary period-doubling sequence are an empty word for any integer. Indeed,
$$G_{n}=\varepsilon,\ \text{for all}\ n\geq 0.$$
\end{remark}

\begin{definition}
Let $\{R_{n}\}_ {n\geq 1}$ be the sequence of factors defined as follows:
$$\left\{
  \begin{array}{ll}
    R_{n} = R_{n}[1,r_{n-1}+1]W_{n-k}[1,r_{n-1}] \hspace*{1cm} if \ n\equiv 1 \pmod{k}\\
    R_{n} = R_{n}[1,r_{n-1}]W_{n-k}[1,r_{n-1}-1] \hspace*{1.4cm} otherwise

  \end{array}
\right.$$
\end{definition}

\begin{theorem} \label{t4.2}
For $k\geq 3$ and for any $n\geq k+1$, we have:

$$\left\{
  \begin{array}{ll}
    R_{n} = R_{n}[1,r_{n-1}+1]s_{k}^{n-(k+1)}(0)R_{n-k}[r_{n-(k+1)}+2,r_{n-k}] \hspace*{0.9cm} if \ n\equiv 1 \pmod{k}\\
    R_{n} = R_{n}[1,r_{n-1}]s_{k}^{n-(k+1)}(0)R_{n-k}[r_{n-(k+1)}+1, r_{n-k}] \hspace*{1.5cm} otherwise

  \end{array}
\right.$$
\end{theorem}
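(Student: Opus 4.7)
The plan is to reduce Theorem \ref{t4.2} to a direct comparison with the Definition immediately preceding it, which already expresses $R_n$ as $R_n[1,r_{n-1}+1]\cdot W_{n-k}[1,r_{n-1}]$ in Case A ($n\equiv 1\pmod k$) and as $R_n[1,r_{n-1}]\cdot W_{n-k}[1,r_{n-1}-1]$ in Case B. It therefore suffices to identify the right-most factor $W_{n-k}[1,\cdot]$ with the word $s_k^{n-(k+1)}(0)\cdot R_{n-k}[\cdot,r_{n-k}]$ that appears in the theorem.

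The first step uses Lemma \ref{L1}(2) to write $W_{n-k}=W_{n-k-1}W_{n-k-1,1}$ and to observe that $W_{n-k-1}=s_k^{n-(k+1)}(0)$. This exhibits the desired prefix of length $w_{n-k-1}$; what remains is a prefix of $W_{n-k-1,1}$ whose length must equal $r_{n-k-1}$ (Case A) or $r_{n-k-1}-1$ (Case B). That is the content of the arithmetic identity
\[
r_{n-1}=w_{n-k-1}+r_{n-k-1}\qquad(n\ge k+1),
\]
which I would prove by strong induction on $n$. The recurrences for $r_n$ from Definition \ref{17} and for $w_n$ can be matched term by term, and one application of the inductive hypothesis at the single level $m=n-k$ closes the argument.

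The next step applies the same preceding Definition, but to $R_{n-k}$ itself (legitimate since $n-k\equiv n\pmod k$), giving $R_{n-k}[r_{n-k-1}+2,r_{n-k}]=W_{n-2k}[1,r_{n-k-1}]$ in Case A (and the analogous equality in Case B). Hence the tail I need to produce is a prefix of $W_{n-2k}$. To finish, I would verify that $W_{n-k-1,1}$ and $W_{n-2k}$ agree on their first $r_{n-k-1}$ letters. Using Lemma \ref{L1}(2) once more, $W_{n-k-1,1}=W_{n-k-2}W_{n-k-2,2}$, so $W_{n-k-1,1}$ begins with $W_{n-k-2}$; iterating $W_j=W_{j-1}W_{j-1,1}$ then shows that $W_{n-k-2}$ has $W_{n-k-3}$ as a prefix, which in turn has $W_{n-k-4}$ as a prefix, and so on down to $W_{n-2k}$. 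Thus $W_{n-k-1,1}$ has $W_{n-2k}$ as a prefix, and the required agreement on $r_{n-k-1}$ letters follows from the bound $r_{n-k-1}\le w_{n-2k}$, itself a consequence of the arithmetic identity above.

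The main obstacle will be the arithmetic identity in the second step: it is the technical heart of the argument and is not visible from the recurrences without careful bookkeeping. A secondary difficulty is the base cases $n=k+1$ and $n=k+2$, where $n-2k<0$ makes $W_{n-2k}$ undefined; in those cases the relevant suffix of $R_{n-k}$ is empty, so the theorem collapses to a direct verification using $R_1=0$, $R_2=1$, $R_{k+1}=000$, and $R_{k+2}=10101$. Once the arithmetic identity and the nested-prefix chain are in place, substituting back into the preceding Definition yields the theorem in both cases.
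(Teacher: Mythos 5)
Your proposal follows essentially the same route as the paper: both start from the defining decomposition $R_n = R_n[1,\cdot]\,W_{n-k}[1,\cdot]$, peel off the prefix $s_k^{n-(k+1)}(0)$ of $W_{n-k}$ via the arithmetic identity $r_{n-1} = w_{n-(k+1)} + r_{n-(k+1)}$, and identify what remains with the suffix of $R_{n-k}$ by applying the same definition at level $n-k$ (your nested-prefix chain down to $W_{n-2k}$ is a repackaging of the paper's telescoping expansion of $s_k^{n-(k+1)}(1)$, which likewise relies on the unproved bound $w_{n-2k} > r_{n-(k+1)}$). One small correction: the degenerate base cases where $W_{n-2k}$ is undefined are all of $k+1 \le n \le 2k-1$, not only $n = k+1$ and $n = k+2$; these coincide only when $k = 3$.
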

\begin{proof}
First, we prove the formula for $n \not\equiv \ 1 \ (\mathrm{mod}\ k).$\\
According to the definition of the kernel word, we have:
\begin{eqnarray*}
  R_{n} &=& R_{n}[1,r_{n-1}]W_{n-k}[1,r_{n-1}-1] \\
   &=& R_{n}[1,r_{n-1}]s_{k}^{n-k}(0)[1,r_{n-1}-1] \\
   &=& R_{n}[1,r_{n-1}]\Bigg(s_{k}^{n-(k+1)}(0)s_{k}^{n-(k+1)}(1)\Bigg)[1,r_{n-1}-1] \\
   &=& R_{n}[1,r_{n-1}]s_{k}^{n-(k+1)}(0)s_{k}^{n-(k+1)}(1)[1,r_{n-1}-w_{n-(k+1)}-1] ~~~~ since ~~ r_{n}> w_{n-(k+1)} \\
   &=& R_{n}[1,r_{n-1}]s_{k}^{n-(k+1)}(0)R_{n-k}[1,r_{n-(k+1)}]^{-1}R_{n-k}[1,r_{n-(k+1)}]\\
&&\Bigg(s_{k}^{n-(k+2)}(0)s_{k}^{n-(k+2)}(2)\Bigg)[1,r_{n-1}-w_{n-(k+1)}-1] \\
   &=& R_{n}[1,r_{n-1}]s_{k}^{n-(k+1)}(0)R_{n-k}[1,r_{n-(k+1)}]^{-1}R_{n-k}[1,r_{n-(k+1)}]\\
&&\Bigg(s_{k}^{n-(k+2)}(0)s_{k}^{n-(k+2)}(2)\Bigg)[1,r_{n-(k+1)}-1] ~~~~ since ~~ r_{n-(k+1)} = r_{n-1} w_{n-(k+1)} \\
   &=& R_{n}[1,r_{n-1}]s_{k}^{n-(k+1)}(0)R_{n-k}[1,r_{n-(k+1)}]^{-1}\Bigg(R_{n-k}[1,r_{n-(k+1)}]
s_{k}^{n-(k+2)}(0)[1,r_{n-(k+1)}-1]\Bigg)\\
&& since ~~ w_{n-(k+2)}> r_{n-(k+1)} \\
   &=&  R_{n}[1,r_{n-1}]s_{k}^{n-(k+1)}(0)R_{n-k}[1,r_{n-(k+1)}]^{-1}\\
&&\Bigg(R_{n-k}[1,r_{n-(k+1)}]
\Bigg(s_{k}^{n-(k+3)}(0)s_{k}^{n-(k+3)}(1)\Bigg)[1,r_{n-(k+1)}-1]\Bigg) 
\end{eqnarray*}

\begin{eqnarray*}
   R_{n}&=& R_{n}[1,r_{n-1}]s_{k}^{n-(k+1)}(0)R_{n-k}[1,r_{n-(k+1)}]^{-1}\Bigg(R_{n-k}[1,r_{n-(k+1)}]
s_{k}^{n-(k+3)}(0)[1,r_{n-(k+1)}-1]\Bigg)\\
&& since ~~ w_{n-(k+3)}> r_{n-(k+1)} \\
   &=& R_{n}[1,r_{n-1}]s_{k}^{n-(k+1)}(0)R_{n-k}[1,r_{n-(k+1)}]^{-1}\\
&&\Bigg(R_{n-k}[1,r_{n-(k+1)}]
\Bigg(s_{k}^{n-(k+4)}(0)s_{k}^{n-(k+4)}(1)\Bigg)[1,r_{n-(k+1)}-1]\Bigg) \\
   &=& R_{n}[1,r_{n-1}]s_{k}^{n-(k+1)}(0)R_{n-k}[1,r_{n-(k+1)}]^{-1}\Bigg(R_{n-k}[1,r_{n-(k+1)}]
s_{k}^{n-(k+4)}(0)[1,r_{n-(k+1)}-1]\Bigg)\\
&& since ~~ w_{n-(k+4)}> r_{n-(k+1)} \\
   &=& R_{n}[1,r_{n-1}]s_{k}^{n-(k+1)}(0)R_{n-k}[1,r_{n-(k+1)}]^{-1}\\
&&\Bigg(R_{n-k}[1,r_{n-(k+1)}]
\Bigg(s_{k}^{n-(2k)}(0)s_{k}^{n-(2k)}(1)\Bigg)[1,r_{n-(k+1)}-1]\Bigg) \\
   &=& R_{n}[1,r_{n-1}]s_{k}^{n-(k+1)}(0)R_{n-k}[1,r_{n-(k+1)}]^{-1}\underline{\Bigg(R_{n-k}[1,r_{n-(k+1)}]
s_{k}^{n-(2k)}(0)[1,r_{n-(k+1)}-1]\Bigg)}\\
&& since ~~ w_{n-(2k)}> r_{n-(k+1)} \\
   &=& R_{n}[1,r_{n-1}]s_{k}^{n-(k+1)}(0)R_{n-k}[1,r_{n-(k+1)}]^{-1} \underline{R_{n-k}} \\
   &=& R_{n}[1,r_{n-1}]s_{k}^{n-(k+1)}(0)R_{n-k}[r_{n-(k+1)}+1, r_{n-k}]
\end{eqnarray*}
An analogous argument yields thecase $n\equiv 1 \pmod{k}$.
\end{proof}
\begin{lemma} \label{l4.1}
The following equalities hold for all integers positive integers $n$.
$$\left\{
  \begin{array}{ll}
    W_{n} = R_{1}G_{1}R_{2}G_{2}\ldots R_{n}G_{n}R_{n+1}[1, r_{n}+1] \hspace*{0.9cm} if \ n\equiv 0 \pmod{k}\\
    W_{n} = R_{1}G_{1}R_{2}G_{2}\ldots R_{n}G_{n}R_{n+1}[1, r_{n}] \hspace*{1.5cm} otherwise

  \end{array}(2)
\right.
$$
and
$$\left\{
  \begin{array}{ll}
    W_{n,1} = R_{n+1}[1, r_{n}+1]^{-1}R_{n+1}G_{n+1}R_{n+2}[1, r_{n+1}] \hspace*{0.9cm} if \ n\equiv 0 \pmod{k}\\
    W_{n,1} = R_{n+1}[1, r_{n}]^{-1}R_{n+1}G_{n+1}R_{n+2}[1, r_{n+1}] \hspace*{1.5cm} otherwise

  \end{array}(3)
\right.
$$

\end{lemma}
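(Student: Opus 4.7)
The plan is to prove the two equalities by a simultaneous strong induction on $n$. The key observation linking the two statements is that $W_{n+1}=W_n\cdot W_{n,1}$: indeed $s_k(0)=01$, so $W_{n+1}=s_k^{n+1}(0)=s_k^n(0)\,s_k^n(1)=W_n\,W_{n,1}$. Consequently, given equality (2) at levels $n$ and $n+1$, equality (3) at level $n$ follows by cancelling the common prefix $R_{n+1}[1,\cdot]$ on the left, so the substantive task is to establish (2).

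For the base cases, say $n\in\{0,1,\ldots,k\}$, I would compute $W_n$ directly from the substitution $s_k$ and match it against the prescribed concatenation, using the explicit values of $R_i$, $G_i$ and $r_i$ from Definitions \ref{17}, \ref{19} and the gap-sequence definition. This is a finite, mechanical check.

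For the inductive step at level $n+1$, the main identity is $W_{n+1}=s_k(W_n)$. Applying $s_k$ to the inductive form of $W_n$ and using the morphism property yields
\[
W_{n+1}=s_k(R_1)\,s_k(G_1)\,s_k(R_2)\,s_k(G_2)\cdots s_k(R_n)\,s_k(G_n)\,s_k\!\bigl(R_{n+1}[1,r_n+\varepsilon_n]\bigr),
\]
where $\varepsilon_n=1$ when $n\equiv0\pmod k$ and $\varepsilon_n=0$ otherwise. I would then repackage each junction $s_k(R_i)\,s_k(G_i)$ into a block of the form $R_{i+1}\,G_{i+1}$ by invoking the two-case recursion for $R_i$ (Definition \ref{19}) and the three-case recursion for $G_i$: in both cases the new factor differs from the $s_k$-image only by deletion or insertion of a boundary $0$, so the extraneous $0$'s produced at each junction cancel against those absorbed by the neighbouring factor. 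To identify the truncated tail $s_k\bigl(R_{n+1}[1,r_n+\varepsilon_n]\bigr)$ with the expected ending $R_{n+1}\,G_{n+1}\,R_{n+2}[1,r_{n+1}+\varepsilon_{n+1}]$, I would invoke Theorem \ref{t4.2}, whose decomposition of $R_{n+1}$ as a prefix of length $r_n+\varepsilon_n$ followed by an $s_k^{n-k}(0)$-block and a suffix of $R_{n+1-k}$ is precisely what is needed to recognize this piece as the required prefix of $W_{n+1}$.

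The main obstacle will be the bookkeeping imposed by three interacting modular conditions: the parity of $i$ that selects the $R_i$ recursion, the residue of $i$ modulo $k$ that selects the $G_i$ recursion, and the residue of $n$ modulo $k$ that sets the tail length $\varepsilon_n$. To control this, I would tabulate, for each pair (parity of $i+1$, residue of $i+1$ modulo $k$), which variants of the recursions for $R_{i+1}$ and $G_{i+1}$ apply, and verify in every cell that the boundary $0$'s emerging from $s_k$ are absorbed exactly once. Once (2) is established at level $n+1$, equality (3) at level $n$ drops out immediately by computing $W_{n,1}=W_n^{-1}W_{n+1}$ and simplifying the overlap at $R_{n+1}$.
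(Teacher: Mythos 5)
Your route is genuinely different from the paper's, and the difference matters. The paper runs a joint induction in which (2) at level $n+1$ is the \emph{easy} half: it is obtained by concatenating the inductive expressions for $W_{n}$ and $W_{n,1}$ and cancelling $R_{n+1}[1,r_{n}]\,R_{n+1}[1,r_{n}]^{-1}$; all the substance is then pushed into establishing (3) at the next level, which the paper does by comparing $W_{n+2}=W_{n+1}W_{n+1,1}$ (via the palindromic identity $W_{n+2}=p_{n+1}\vartheta_{n+1}p_{n+1}\vartheta_{n+2}$ from Lemma \ref{C4}) against $W_{n+2}=s_{k}^{n+1}(0)s_{k}^{n+1}(1)$. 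You invert this: (2) carries the content, proved by applying $s_{k}$ to the factorization of $W_{n}$ and repackaging each block, and (3) drops out by prefix cancellation from (2) at consecutive levels. Your arrangement is in one respect cleaner --- the paper's derivation of (3) expands $W_{n+1,1}$ using the very identity being proved, a circularity your ordering avoids --- and you never need the identity $W_{n,1}=p_{n}\vartheta_{n+1}$ (that $W_{n,1}$ is $W_{n}$ with its last letter replaced), which is the genuinely new input the paper's version relies on.

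Two concrete issues remain. First, the step you defer --- that $s_{k}(R_{i})\,s_{k}(G_{i})$ repackages as $R_{i+1}G_{i+1}$ with the boundary $0$'s cancelling --- is exactly where the difficulty lives, and it is not a routine check: from Definition \ref{19} and the gap recursion one gets $r_{i+1}+g_{i+1}=2(r_{i}+g_{i})$ or $2(r_{i}+g_{i})+2$ according to the parity of $i+1$ and its residue modulo $k$, so the surplus or deficit of letters at a junction does not vanish locally but must telescope along the whole product; your plan asserts this cancellation without exhibiting the telescoping, which on your route is the entire content of the lemma. Second, deriving (3) at level $n$ by cancelling $W_{n}$ from $W_{n+1}$ yields the tail $R_{n+2}[1,r_{n+1}+1]$ whenever $n+1\equiv 0\pmod{k}$, whereas the statement of (3) has tail $R_{n+2}[1,r_{n+1}]$ in every case; either the statement needs the same case split on $n+1$ that (2) carries, or your cancellation proves something off by one letter in those residue classes. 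You should resolve that discrepancy explicitly rather than absorb it into ``simplifying the overlap.''
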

\begin{proof}
We will start by proving the formula for $n \not\equiv \ 0 \ (\mathrm{mod}\ k).$\\
The case $n= 1$ can be verified directly.\\
Suppose equations $(2)$ and $(3)$ are valid for all integers $m\leq n.$\\
Then, for $m=n+1,$ we obtain:
\begin{eqnarray*}
  W_{n+1} &=& s_{k}^{(n+1)}(0) \\
   &=& s_{k}^{n}(s_{k}(0)) \\
   &=& s_{k}^{n}(01) \\
   &=& s_{k}^{n}(0)s_{k}^{n}(1)\\
   &=& W_{n}W_{n,1}.
\end{eqnarray*}
According to the induction hypothesis, it follows that:
$$W_{n} = R_{1}G_{1}R_{2}G_{2}\ldots R_{n}G_{n}R_{n+1}[1, r_{n}] ~~ and ~~ W_{n,1} = R_{n+1}[1, r_{n}]^{-1}R_{n+1}G_{n+1}R_{n+2}[1, r_{n+1}].$$
Substituting these identities yields:
\begin{eqnarray*}
    W_{n+1} &=& W_{n}W_{n,1} \\
   &=& R_{1}G_{1}R_{2}G_{2}\ldots R_{n}G_{n}\underbrace{R_{n+1}[1, r_{n}]R_{n+1}[1, r_{n}]^{-1}}R_{n+1}G_{n+1}R_{n+2}[1, r_{n+1}] \\
   &=& R_{1}G_{1}R_{2}G_{2}\ldots R_{n}G_{n}R_{n+1}G_{n+1}R_{n+2}[1, r_{n+1}].
\end{eqnarray*}
Therefore, equation $(2)$ is also verified for $n+1$ as well.\\
From the preceding step, it follows that by Lemma \ref{C4}, we have:
  \begin{eqnarray*}
  W_{n+2} &=& p_{n+2} \vartheta_{n+2} \\
          &=& \underline{p_{n+1} \vartheta_{n+1}} \underbrace{p_{n+1} \vartheta_{n+2}} \\
          &=& \underline{W_{n+1}} \underbrace{W_{n+1,1}} \\
   &=& \Bigg(R_{1}G_{1}R_{2}G_{2}\ldots R_{n+1}G_{n+1}R_{n+2}[1, r_{n+1}+1]\Bigg)\\
&&\Bigg(R_{n+2}[1, r_{n+1}]^{-1}R_{n+2}G_{n+2}R_{n+3}[1, r_{n+2}]\Bigg) \\
   &=& R_{1}G_{1}R_{2}G_{2}\ldots R_{n+1}G_{n+1}R_{n+2}G_{n+2}R_{n+3}[1, r_{n+2}].
\end{eqnarray*}
As an alternative, we can apply the substitution rule to obtain:
\begin{eqnarray*}
  W_{n+2} &=& s_{k}^{n+2}(0) = s_{k}^{n+1}(s_{k}(0)) = s_{k}^{n+1}(01) = s_{k}^{n+1}(0)s_{k}^{n+1}(1) \\
   &=& R_{1}G_{1}R_{2}G_{2}\ldots R_{n+1}G_{n+1}R_{n+2}[1, r_{n+1}]s_{k}^{n+1}(1).
\end{eqnarray*}
A comparison of equations $(5)$ and $(6)$ yields the following conclusion:
\begin{equation*}
    W_{n+2,1} = s_{k}^{n+2}(1) = R_{n+2}[1, r_{n+1}]^{-1}R_{n+2}G_{n+2}R_{n+3}[1, r_{n+2}].
\end{equation*}
This proves equation $(6)$ for $n+1.$\\
Therefore, both identities $(2)$ and $(3)$ are valid for all $n\geq 1,$ which completes the proof.\\
In the case of $(n\equiv 0 \mod k),$ the constructions of $W_{n}$ and $W_{n,1}$ follow from an analogous reasoning.
\end{proof}
\begin{proposition} \label{p4.2}
For every $n\geq k+1,$ the kernel word can be expressed as follows:
$$\left\{
  \begin{array}{ll}
    R_{n} = R_{n}[1,r_{n-1}+1]R_{1}G_{1}R_{2}G_{2}\ldots R_{n-(k+1)}G_{n-(k+1)}R_{n-k} \hspace*{0.9cm} if \ n\equiv 1 \pmod{k}\\
    R_{n} = R_{n}[1,r_{n-1}]R_{1}G_{1}R_{2}G_{2}\ldots R_{n-(k+1)}G_{n-(k+1)}R_{n-k} \hspace*{1.5cm} otherwise

  \end{array}
\right.$$
\end{proposition}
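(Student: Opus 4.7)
The plan is to combine Theorem \ref{t4.2} with Lemma \ref{l4.1} in a direct substitution. Theorem \ref{t4.2} already writes $R_n$ as a concatenation of a prefix of $R_n$, the block $s_k^{n-(k+1)}(0)$, and a suffix of $R_{n-k}$. Since $s_k^{n-(k+1)}(0)=W_{n-(k+1)}$, I would first rewrite the expression from Theorem \ref{t4.2} as
\[
R_n = R_n[1,r_{n-1}+\alpha]\, W_{n-(k+1)}\, R_{n-k}[r_{n-(k+1)}+1+\alpha,\, r_{n-k}],
\]
where $\alpha=1$ when $n\equiv 1\pmod k$ and $\alpha=0$ otherwise. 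The whole game is then to replace $W_{n-(k+1)}$ using Lemma \ref{l4.1} and show that the leftover prefix of $R_{n-k}$ produced by that lemma fuses exactly with the tail $R_{n-k}[r_{n-(k+1)}+1+\alpha,\, r_{n-k}]$ to recreate $R_{n-k}$ in full.

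For this fusion step I would split on the residue of $n\pmod k$. If $n\not\equiv 1\pmod k$, then $m:=n-(k+1)\not\equiv 0\pmod k$, so Lemma \ref{l4.1} gives
\[
W_{n-(k+1)} = R_1 G_1 R_2 G_2 \cdots R_{n-(k+1)} G_{n-(k+1)} R_{n-k}[1,\, r_{n-(k+1)}],
\]
and the tail from Theorem \ref{t4.2} is $R_{n-k}[r_{n-(k+1)}+1,\, r_{n-k}]$; their concatenation is precisely $R_{n-k}$, which yields the stated formula. Dually, if $n\equiv 1\pmod k$ then $m\equiv 0\pmod k$ and Lemma \ref{l4.1} produces the longer prefix $R_{n-k}[1,\, r_{n-(k+1)}+1]$, which merges with the tail $R_{n-k}[r_{n-(k+1)}+2,\, r_{n-k}]$ again giving the full $R_{n-k}$ and accounting for the extra $+1$ on the leading factor $R_n[1,r_{n-1}+1]$.

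The main obstacle I anticipate is purely bookkeeping: checking that the two branches of Lemma \ref{l4.1} are invoked on the correct residue of $n-(k+1)\pmod k$ and that the index arithmetic on the suffix of $R_{n-k}$ matches up so that $R_{n-k}[1,r_{n-(k+1)}+\alpha]\cdot R_{n-k}[r_{n-(k+1)}+1+\alpha, r_{n-k}]=R_{n-k}$. Once this case split is laid out cleanly, no further induction is required: Theorem \ref{t4.2} and Lemma \ref{l4.1} together furnish the identity as a one-line substitution. I would present the proof as two short cases, each of which is just a substitution followed by concatenation of adjacent subwords of $R_{n-k}$.
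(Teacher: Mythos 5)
Your proposal is correct and is essentially the paper's own argument: it likewise substitutes the Lemma \ref{l4.1} expansion of $W_{n-(k+1)}=s_{k}^{n-(k+1)}(0)$ into the expression from Theorem \ref{t4.2} and fuses the resulting prefix $R_{n-k}[1,r_{n-(k+1)}+\alpha]$ with the tail $R_{n-k}[r_{n-(k+1)}+1+\alpha,r_{n-k}]$ to recover $R_{n-k}$ (the paper phrases the fusion via inserting and cancelling $R_{n-k}[1,\cdot]^{-1}R_{n-k}[1,\cdot]$). Your residue bookkeeping, noting that $n\equiv 1\pmod k$ iff $n-(k+1)\equiv 0\pmod k$, is in fact stated more cleanly than in the paper's case labels.
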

\begin{proof}
By applying Theorem \ref{t4.2} together with equatuion \eqref{l4.1}, we obtain: \\
$\textbf{Case 1:}$ If $(n-1)\equiv 1 \pmod{k},$ we have:
\begin{eqnarray*}
  R_{n} &=& R_{n}[1,r_{n-1}+1]s_{k}^{n-(k+1)}(0)R_{n-k}[r_{n-(k+1)}+2,r_{n-k}] \\
   &=& R_{n}[1,r_{n-1}+1]\underline{s_{k}^{n-(k+1)}(0)}R_{n-k}[1,r_{n-(k+1)}+1]^{-1}R_{n-k}[1,r_{n-k}]  \\
   &=& R_{n}[1,r_{n-1}+1]\underline{R_{1}G_{1}R_{2}G_{2}\ldots R_{n-(k+1)}G_{n-(k+1)}R_{n-k}[1,r_{n-(k+1)}+1]}R_{n-k}[1,r_{n-(k+1)}+1]^{-1} \\ && R_{n-k}[1,r_{n-k}] \\
   &=& R_{n}[1,r_{n-1}+1]R_{1}G_{1}R_{2}G_{2}\ldots R_{n-(k+1)}G_{n-(k+1)}R_{n-k}[1,r_{n-k}] \\
   &=& R_{n}[1,r_{n-1}+1]R_{1}G_{1}R_{2}G_{2}\ldots R_{n-(k+1)}G_{n-(k+1)}R_{n-k}
\end{eqnarray*}
$\textbf{Case 2:}$ If $(n-1)\not\equiv 1 \pmod{k},$ we have:
\begin{eqnarray*}
  R_{n} &=& R_{n}[1,r_{n-1}]s_{k}^{n-(k+1)}(0)R_{n-k}[r_{n-(k+1)}+1, r_{n-k}] \\
   &=& R_{n}[1,r_{n-1}]\underline{s_{k}^{n-(k+1)}(0)}R_{n-k}[1,r_{n-(k+1)}]^{-1}R_{n-k}[1,r_{n-k}]  \\
   &=& R_{n}[1,r_{n-1}]\underline{R_{1}G_{1}R_{2}G_{2}\ldots R_{n-(k+1)}G_{n-(k+1)}R_{n-k}[1,r_{n-(k+1)}]}R_{n-k}[1,r_{n-(k+1)}]^{-1} \\ && R_{n-k}[1,r_{n-k}] \\
   &=& R_{n}[1,r_{n-1}]R_{1}G_{1}R_{2}G_{2}\ldots R_{n-(k+1)}G_{n-(k+1)}R_{n-k}[1,r_{n-k}] \\
   &=& R_{n}[1,r_{n-1}]R_{1}G_{1}R_{2}G_{2}\ldots R_{n-(k+1)}G_{n-(k+1)}R_{n-k}
\end{eqnarray*}
\end{proof}
\begin{theorem}
For $n\geq k+1,$ the sequence of gaps $G_{n}$ occurring between two successive kernel words $R_{n}$ and $R_{n+1}$ is described by the following formula:
$$\left\{
  \begin{array}{ll}
    G_{n} = G_{n-k}\Bigg(\displaystyle \prod_{i=k-1}^{2}(R_{n-i}G_{n-i})\Bigg)R_{n-1}[1,r_{n-2}+1]s_{k}^{n-2}(2)\ast \hspace*{0.9cm} if \ (n-1)\equiv 1 \pmod{k}\\
    G_{n} = G_{n-k}\Bigg(\displaystyle \prod_{i=k-1}^{2}(R_{n-i}G_{n-i})\Bigg)R_{n-1}[1,r_{n-2}]s_{k}^{n-2}(2)\ast \hspace*{1.5cm} otherwise

  \end{array}
\right.$$
where $\ast$ is a suffix of $s_{k}^{n-2}(0)$ and is defined as follows:
$$\left\{
  \begin{array}{ll}
    R_{n+1}[1,r_{n}+1]^{-1} \hspace*{0.9cm} if \ n\equiv 0 \pmod{k}\\
    R_{n+1}[1,r_{n}]^{-1} \hspace*{1.5cm} otherwise

  \end{array}
\right.$$
\end{theorem}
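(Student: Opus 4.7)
The plan is to extract $G_n$ by exhibiting two distinct expansions of the single word $W_{n-1,1} = s_k^{n-1}(1)$ and equating them.

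First, Lemma~\ref{l4.1} applied to $W_{n-1,1}$ gives an identity of the form
\[
W_{n-1,1} = R_n[1, r_{n-1}^*]^{-1} R_n\, G_n\, R_{n+1}[1, r_n^*],
\]
where the prefix lengths $r_{n-1}^*$ and $r_n^*$ are $r_j$ or $r_j+1$ according to the modular case. Applying Proposition~\ref{p4.2} to rewrite $R_n$ as $R_n[1, r_{n-1}^*]\, R_1 G_1 R_2 G_2 \cdots R_{n-(k+1)} G_{n-(k+1)} R_{n-k}$, the inverse prefix telescopes and yields
\[
W_{n-1,1} = R_1 G_1 R_2 G_2 \cdots R_{n-(k+1)} G_{n-(k+1)} R_{n-k} \cdot G_n \cdot R_{n+1}[1, r_n^*].
\]

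Next, I iterate Lemma~\ref{L1}(2) starting from $W_{n-1,1} = W_{n-2}\, W_{n-2,2}$, peeling off $W_{n-j,j} = W_{n-j-1}\, W_{n-j-1,j+1}$ for $j = 2, \ldots, k-2$, and terminating at $W_{n-k+1, k-1} = W_{n-k}^2$. This produces
\[
W_{n-1,1} = W_{n-2} W_{n-3} \cdots W_{n-k+1} W_{n-k}^2.
\]
I then expand only the leading factor $W_{n-2}$ by Lemma~\ref{l4.1} as $R_1 G_1 \cdots R_{n-2} G_{n-2} R_{n-1}[1, r_{n-2}^*]$, and observe that the trailing block $W_{n-3} \cdots W_{n-k+1} W_{n-k}^2$ equals $W_{n-2,2} = s_k^{n-2}(2)$ by the same iteration read in reverse. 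This gives the second expansion
\[
W_{n-1,1} = R_1 G_1 R_2 G_2 \cdots R_{n-2} G_{n-2} R_{n-1}[1, r_{n-2}^*] \cdot s_k^{n-2}(2).
\]

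Equating the two expressions and cancelling the common left prefix $R_1 G_1 \cdots R_{n-(k+1)} G_{n-(k+1)} R_{n-k}$, which appears as an initial segment of the second expansion since $n-1 \geq k+1$ (with the empty product interpreted as $\varepsilon$ when $n = k+1$), leaves
\[
G_n \cdot R_{n+1}[1, r_n^*] = G_{n-k} \Bigl(\prod_{i=k-1}^{2} R_{n-i}\, G_{n-i}\Bigr) R_{n-1}[1, r_{n-2}^*] \cdot s_k^{n-2}(2).
\]
Multiplying on the right by $R_{n+1}[1, r_n^*]^{-1}$ and setting $\ast := R_{n+1}[1, r_n^*]^{-1}$ delivers the stated formula. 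The main obstacle is entirely bookkeeping: I must track the three independent modular conditions — those determining $r_{n-1}^*$ (from Lemma~\ref{l4.1} applied to $W_{n-1,1}$, requiring $n-1 \equiv 0 \pmod k$ and aligning with Proposition~\ref{p4.2} for $R_n$ via $n \equiv 1 \pmod k$), $r_{n-2}^*$ (from Lemma~\ref{l4.1} applied to $W_{n-2}$, governed by $n-2 \equiv 0 \pmod k$, equivalently $n-1 \equiv 1 \pmod k$), and $r_n^*$ (the trailing prefix in the first expansion, governed by $n \equiv 0 \pmod k$) — and verify that they collapse precisely to the two case splits $(n-1) \equiv 1 \pmod k$ and $n \equiv 0 \pmod k$ stated in the theorem. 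The final remark that $\ast$ corresponds to a suffix of $s_k^{n-2}(0)$ then reduces to identifying that the word $R_{n+1}[1, r_n^*]$ being cancelled is the trailing segment of $s_k^{n-2}(2)$ on the right-hand side, and that $s_k^{n-2}(0)$ and $s_k^{n-2}(2)$ share this suffix by the substitution structure.
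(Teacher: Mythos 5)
Your proposal is correct and follows essentially the same route as the paper: both arguments split $s_k^{n-1}(1)=s_k^{n-2}(0)s_k^{n-2}(2)$, expand via Lemma \ref{l4.1}, use Proposition \ref{p4.2} to assemble the full kernel word $R_n$ from its prefix together with $R_1G_1\cdots R_{n-k}$, and read off $G_n$ by comparing with the direct expansion ending in $R_nG_nR_{n+1}[1,\cdot]$. The only (cosmetic) difference is that you work with $W_{n-1,1}$ via identity (3) of Lemma \ref{l4.1}, whereas the paper carries the extra prefix $W_{n-1}$ and compares expansions of $W_n$ via identity (2).
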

\begin{proof}
According to the iteration of $s_{k},$ using Lemma \ref{l4.1} and Proposition \ref{p4.2}, we get:
\begin{eqnarray*}
  W_{n} &=& s_{k}^{n}(0) = s_{k}^{n-1}(01) = s_{k}^{n-1}(0)s_{k}^{n-1}(1) \\
   &=& s_{k}^{n-1}(0)s_{k}^{n-2}(02) \\
   &=& s_{k}^{n-1}(0)s_{k}^{n-2}(0)s_{k}^{n-2}(2).
\end{eqnarray*}
We distinguish two cases:\\
$\textbf{Case 1:}$ If $(n-1)\equiv 1 \pmod{k},$ then, $(n-2)\equiv 0 \pmod{k}$ and $(n-1) \not\equiv 0 \pmod{k}.$\\
Therefore, $$W_{n-1} = R_{1}G_{1}R_{2}G_{2}\ldots R_{n-1}G_{n-1}R_{n}[1, r_{n-1}]$$ and
 $$W_{n-2} = R_{1}G_{1}R_{2}G_{2}\ldots R_{n-2}G_{n-2}R_{n-1}[1, r_{n-2}+1].$$
Then:
\begin{eqnarray*}
  W_{n} &=& s_{k}^{n-1}(0)s_{k}^{n-2}(0)s_{k}^{n-2}(2) \\
   &=& \Bigg(R_{1}G_{1}R_{2}G_{2}\ldots R_{n-1}G_{n-1}R_{n}[1, r_{n-1}]\Bigg) \\
&&\Bigg(R_{1}G_{1}R_{2}G_{2}\ldots R_{n-k}G_{n-k}R_{n-(k-1)}G_{n-(k-1)} \ldots R_{n-2}G_{n-2}R_{n-1}[1, r_{n-2}+1]\Bigg)s_{k}^{n-2}(2) \\
   &=& \Bigg(R_{1}G_{1}R_{2}G_{2}\ldots R_{n-1}G_{n-1}\Bigg)\Bigg(\underline{R_{n}[1, r_{n-1}]R_{1}G_{1}R_{2}G_{2}\ldots R_{n-k}}_{R_{n}}\Bigg) \\
&& G_{n-k}R_{n-(k-1)}G_{n-(k-1)} \ldots R_{n-2}G_{n-2}R_{n-1}[1, r_{n-2}+1]s_{k}^{n-2}(2) \\
   &=& R_{1}G_{1}R_{2}G_{2}\ldots R_{n-1}G_{n-1}\underline{R_{n}}G_{n-k}R_{n-(k-1)}G_{n-(k-1)} \ldots R_{n-2}G_{n-2}R_{n-1}[1, r_{n-2}+1]s_{k}^{n-2}(2) \\
   &=& R_{1}G_{1}R_{2}G_{2}\ldots R_{n-1}G_{n-1}R_{n}G_{n-k}\Bigg(\displaystyle \prod_{i=k-1}^{2}(R_{n-i}G_{n-i})\Bigg)R_{n-1}[1, r_{n-2}+1]s_{k}^{n-2}(2)
\end{eqnarray*}
Furthermore, according to Lemma \ref{l4.1}, we obtain:
\begin{equation*}
   W_{n-1} = R_{1}G_{1}R_{2}G_{2}\ldots R_{n}G_{n}R_{n+1}[1, r_{n}] .
\end{equation*}
Hence,
\begin{equation*}
    G_{n} = G_{n-k}\Bigg(\displaystyle \prod_{i=k-1}^{2}(R_{n-i}G_{n-i})\Bigg)R_{n-1}[1,r_{n-2}+1]s_{k}^{n-2}(0)\ast.
\end{equation*}
Since $R_{n+1}[1, r_{n}]$ is a suffix of $s_{k}^{n-2}(2).$\\
$\textbf{Case 2:}$ If $(n-1)\not\equiv 1 \pmod{k},$ then, $(n-2)\not\equiv 0 \pmod{k}$ and $(n-1) \not\equiv 0 \pmod{k}$ ou $(n-1) \equiv 0 \pmod{k}.$ \\
Therefore,
$$W_{n-1} = R_{1}G_{1}R_{2}G_{2}\ldots R_{n-1}G_{n-1}R_{n}[1, r_{n-1}+1]$$ or $$W_{n-1} = R_{1}G_{1}R_{2}G_{2}\ldots R_{n-1}G_{n-1}R_{n}[1, r_{n-1}+1]$$
 and
 $$W_{n-2} = R_{1}G_{1}R_{2}G_{2}\ldots R_{n-2}G_{n-2}R_{n-1}[1, r_{n-2}].$$
Then,
\begin{eqnarray*}
  W_{n} &=& s_{k}^{n-1}(0)s_{k}^{n-2}(0)s_{k}^{n-2}(2) \\
   &=& \Bigg(R_{1}G_{1}R_{2}G_{2}\ldots R_{n-1}G_{n-1}R_{n}[1, r_{n-1}+1]\Bigg) \\
&&\Bigg(R_{1}G_{1}R_{2}G_{2}\ldots R_{n-k}G_{n-k}R_{n-(k-1)}G_{n-(k-1)} \ldots R_{n-2}G_{n-2}R_{n-1}[1, r_{n-2}]\Bigg)s_{k}^{n-2}(2) \\
   &=& \Bigg(R_{1}G_{1}R_{2}G_{2}\ldots R_{n-1}G_{n-1}\Bigg)\Bigg(\underline{R_{n}[1, r_{n-1}+1]R_{1}G_{1}R_{2}G_{2}\ldots R_{n-k}}_{R_{n}}\Bigg) \\
&& G_{n-k}R_{n-(k-1)}G_{n-(k-1)} \ldots R_{n-2}G_{n-2}R_{n-1}[1, r_{n-2}]s_{k}^{n-2}(2) \\
   &=& R_{1}G_{1}R_{2}G_{2}\ldots R_{n-1}G_{n-1}\underline{R_{n}}G_{n-k}R_{n-(k-1)}G_{n-(k-1)} \ldots R_{n-2}G_{n-2}R_{n-1}[1, r_{n-2}]s_{k}^{n-2}(2) \\
   &=& R_{1}G_{1}R_{2}G_{2}\ldots R_{n-1}G_{n-1}R_{n}G_{n-k}\Bigg(\displaystyle \prod_{i=k-1}^{2}(R_{n-i}G_{n-i})\Bigg)R_{n-1}[1, r_{n-2}]s_{k}^{n-2}(2)
\end{eqnarray*}
Furthermore, according to Lemma \ref{l4.1}, we obtain
\begin{equation*}
   W_{n-1} = R_{1}G_{1}R_{2}G_{2}\ldots R_{n}G_{n}R_{n+1}[1, r_{n}+1].
\end{equation*}
Consequently,
\begin{equation*}
    G_{n} = G_{n-k}\Bigg(\displaystyle \prod_{i=k-1}^{2}(R_{n-i}G_{n-i})\Bigg)R_{n-1}[1,r_{n-2}]s_{k}^{n-2}(0)\ast.
\end{equation*}
Since $R_{n+1}[1, r_{n}+1]$ is a suffix of $s_{k}^{n-2}(2).$
\end{proof}

We now turn our attention to the lengths of the gap sequences $\{G_{n}\}_{n\geq1}$. Let $g_{n} := |G_{n}|$ for $n\geq 1$. It is straightforward to verify that
$g_{1}=0,\ g_{2} = 1$. For convenience, we also define $g_{0} = 0$.

\begin{corollary}
For $n\geq k+1,$ we obtain:
$$\left\{
  \begin{array}{ll}
    g_{n} = g_{n-k} + \displaystyle\sum_{i=k-1}^{2}~r_{n-i} + \displaystyle\sum_{i=k-1}^{2}~g_{n-i} + r_{n-2} + 1 + w_{n-2,2} - |\ast| \hspace*{0.9cm} if \ (n-1)\equiv 1 \pmod{k}\\
    g_{n} = g_{n-k} + \displaystyle\sum_{i=k-1}^{2}~r_{n-i} + \displaystyle\sum_{i=k-1}^{2}~g_{n-i} + r_{n-2} + w_{n-2,2} - |\ast| \hspace*{1.5cm} otherwise

  \end{array}
\right.$$
\end{corollary}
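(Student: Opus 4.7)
The plan is to derive the corollary simply by applying $|\cdot|$ to both sides of the decomposition
$$G_n \;=\; G_{n-k}\Bigl(\prod_{i=k-1}^{2} R_{n-i}G_{n-i}\Bigr) R_{n-1}[1, r_{n-2}+\delta]\, s_k^{n-2}(2)\, \ast$$
established in the preceding theorem, where $\delta\in\{0,1\}$ equals $1$ when $(n-1)\equiv 1 \pmod{k}$ and $0$ otherwise. The two required tools are additivity of length under concatenation and the convention $aa^{-1}=a^{-1}a=\varepsilon$ fixed in Section~\ref{sec2}, which ensures that the trailing formal inverse $\ast$ contributes negatively to the length rather than positively.

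Term by term, I would read off the contributions as follows. The factor $G_{n-k}$ contributes $g_{n-k}$; the product $\prod_{i=k-1}^{2} R_{n-i}G_{n-i}$ contributes $\sum_{i=k-1}^{2}(r_{n-i}+g_{n-i})$, which splits as the two separate sums $\sum r_{n-i}$ and $\sum g_{n-i}$ appearing in the statement; the prefix $R_{n-1}[1,r_{n-2}+\delta]$ contributes $r_{n-2}+\delta$, giving the $+1$ in the first case and none in the second; and $s_k^{n-2}(2)=W_{n-2,2}$ contributes $w_{n-2,2}$. Finally, by definition $\ast=R_{n+1}[1,r_n+\delta]^{-1}$ is a formal inverse of a word of length $|\ast|$, so its contribution to the total length is $-|\ast|$. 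Summing all of these yields exactly the two displayed formulas.

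The only delicate point, and the step I expect is the main obstacle, is to justify that this inverse-cancellation is genuine rather than merely formal: one must know that the word immediately preceding $\ast$ actually ends with the prefix $R_{n+1}[1,r_n+\delta]$, so that appending $\ast$ really shortens the word by $|\ast|$ letters. This is exactly what is recorded in the closing line of the proof of the preceding theorem (\emph{``Since $R_{n+1}[1,r_n]$ is a suffix of $s_k^{n-2}(2)$''}, and similarly in the other case). With that observation in hand, the argument reduces to pure length arithmetic and the corollary follows.
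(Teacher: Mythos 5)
Your proof is correct and is exactly the intended derivation: the paper states this corollary without any proof, as an immediate consequence of applying $|\cdot|$ to the decomposition of $G_{n}$ in the preceding theorem, which is precisely what you do term by term. The one substantive point you flag---that the trailing formal inverse $\ast$ genuinely cancels because $R_{n+1}[1,r_{n}+\delta]$ is a suffix of the word it follows---is indeed the fact recorded at the end of that theorem's proof, so your argument is complete (modulo the harmless imprecision that the exponent shift in $\ast$ is governed by $n\equiv 0 \pmod{k}$ rather than by $(n-1)\equiv 1 \pmod{k}$, which does not affect the formula since $|\ast|$ is left unexpanded).
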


\begin{proposition}
For $n\geq k+1,$ the length of $G_{n}$ can be expressed by the following explicit formula:
\begin{equation*}
    g_{n} = (g_{k+1}+1)2^{n-(k+1)} - 1.
\end{equation*}

\end{proposition}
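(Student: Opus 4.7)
The plan is to bypass the complicated recurrence in the preceding corollary and argue directly from the defining relation of the gap sequence, namely $G_n = s_k(G_{n-1})0$ for all $n \geq k+1$, combined with the fact that $s_k$ is $2$-uniform.

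First, I would record that $|s_k(m)| = 2$ for every $m \in \mathcal{A}_k$, since $s_k(m) = 0(m+1)$ when $m \leq k-2$ and $s_k(k-1) = 00$. Extending by the morphism property gives $|s_k(w)| = 2|w|$ for every finite word $w$. Applied to $w = G_{n-1}$ and combined with $G_n = s_k(G_{n-1})0$, this yields the clean linear recurrence
\[ g_n = 2g_{n-1} + 1 \qquad \text{for all } n \geq k+2. \]

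The remaining step is to solve this recurrence. Setting $h_n := g_n + 1$ converts it into $h_n = 2h_{n-1}$, which by immediate iteration from the base case $n=k+1$ gives $h_n = 2^{n-(k+1)} h_{k+1}$, that is, $g_n + 1 = 2^{n-(k+1)}(g_{k+1}+1)$. Rearranging produces the announced closed form $g_n = (g_{k+1}+1)2^{n-(k+1)} - 1$, and the case $n = k+1$ is trivially an identity.

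There is essentially no obstacle along this route, so the only real question is presentation. If one instead insists on reading the formula off the corollary's long recurrence, the work would be to collapse the expression $g_{n-k} + \sum_{i=2}^{k-1} r_{n-i} + \sum_{i=2}^{k-1} g_{n-i} + r_{n-2} + w_{n-2,2} - |\ast|$ (and its variant in the other congruence class) down to $2g_{n-1}+1$. That simplification would rely on the defining identities for $w_m$, the recurrence \eqref{E1}, and a careful case analysis of $|\ast|$ in both congruence classes modulo $k$; it is the only nontrivial step and, in my view, is better replaced by the direct morphism-length argument above.
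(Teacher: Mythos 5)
Your proposal is correct and follows essentially the same route as the paper: both reduce the statement to the linear recurrence $g_n = 2g_{n-1}+1$ and then solve it (the paper via the standard arithmetico-geometric formula, you via the shift $h_n = g_n+1$, which is equivalent). Your version is in fact slightly more complete, since you justify the recurrence from the $2$-uniformity of $s_k$ and the defining relation $G_n = s_k(G_{n-1})0$, a step the paper asserts without proof.
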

\begin{proof}
For all $n\geq k+1,$ the length $g_{n}$ resatisfies the following relation:
$$g_{n}=2g_{n-1}+1.$$
This relationship defines an arithmetic-geometric sequence. Thus, the general term of the sequence $(g_{n})_{n\geq k+1}$ is:
$$g_{n}=(g_{p}-l)\times 2^{n-p}+l,$$
where $l=\dfrac{1}{1-2}=-1$ and $g_{p}$, the initial term. It follows that:
$$g_{n}=(g_{k+1}+1)\times 2^{n-(k+1)}-1.$$
\end{proof}
\begin{example}
For $k = 4$, we consider the substitution $s_{4}$ given by: $$s_{4}: \left\{
                       \begin{array}{ll}
                         0 \mapsto 01  \\
                         1 \mapsto 02 \\
                         2 \mapsto 03 \\
                         3 \mapsto 00
                       \end{array}
                     \right.$$
The $4$-period-doubling word is the infinite word generated by $s_{4}$ from $0$. Thus:
\begin{align*}
\textbf{P}_{4}&=s^{\infty}_{4}(0)\\
&=0 ~ 1 ~ 0 ~ 2 ~ 010 ~~ 3 ~~ 010201 ~~ 000 ~~ 102010301020 ~~ 10101 ~~ 020103010201000102\ldots \cdots.
\end{align*}
The first few values, length of the gap sequence $\{G_{n}\}_{n\geq1},$ and the gap sequences of the $4$-period-doubling sequence are:
$$\left\{
  \begin{array}{ll}
    G_{1} = \epsilon \\
    G_{2} = 0 \\
    G_{3} = 010 \\
    G_{4} = 010201 \\
    G_{5} = 102010301020 \\
    G_{6} = 0201030102010001020103010 \\
    G_{7} = 010301020100010201030102010101020103010201000102010 \\
    \vdots
  \end{array}
\right. ~~ and ~~ \left\{
                    \begin{array}{ll}
                      g_{1} = 0 \\
                      g_{2} = 1 \\
                      g_{3} = 3 \\
                      g_{4} = 6 \\
                      g_{5} = 12 \\
                      g_{6} = 25 \\
                      g_{7} = 51 \\
                      \vdots
                    \end{array}
                  \right.$$

\end{example}

\section{Conclusion}
Factorization of infinite words is an essential tool for understanding their components. The factorization presented in this paper enhances our comprehension of the $k$-period-doubling substitution decomposition by analyzing its kernel words and the gap sequences between them.





\end{document}